\documentclass[a4paper,12pt,showkeys]{amsart}

\usepackage{amsmath,amssymb,amsthm,mathrsfs}
\usepackage{type1cm}
\usepackage{ulem}
\usepackage{fancybox}
\usepackage[pdftex]{graphicx}
\usepackage{tikz}
\usepackage[all]{xy}
\usepackage{array,booktabs}
\usepackage{bm}

\theoremstyle{plain}
\newtheorem{theorem}{Theorem}[section]

\newtheorem{lemma}[theorem]{Lemma}

\theoremstyle{definition}
\newtheorem{definition}[theorem]{Definition}
\newtheorem{remark}[theorem]{Remark}
\newtheorem{example}[theorem]{Example}
\newtheorem{proposition}[theorem]{Proposition}

\newcommand{\GL}{{\rm{GL}}}
\newcommand{\PGL}{{\rm{PGL}}}

\newcommand{\Ker}{{\rm{Ker}}}
\newcommand{\im}{{\rm{Im}}}

\newcommand{\Hom}{{\rm{Hom}}}
\newcommand{\End}{{\rm{End}}}

\newcommand{\Lim}{\displaystyle \lim_{t\rightarrow 0}}
\newcommand{\PM}{\mathbb{P}M}

\begin{document}

\title{On $\mathbb{P}M$-monoids and braid $\mathbb{P}M$-monoids}
\author{Toshinori Miyatani}
\date{}
\address{Graduate School of Science, Hokkaido University, Sapporo, 004-0022, Japan}
\email{miyatani@math.sci.hokudai.ac.jp}

\begin{abstract}
In this paper, we shall introduce two monoids. One is called a $\mathbb{P}M$-monoid which contains the symmetric group, the other is called a braid $\PM$-monoid which contains the braid group. We shall develop the theory of $\PM$-monoids and that of braid $\PM$-monoids. The $\PM$-monoids is obtained in the context of the compactification of projective linear group defined by Mutsumi Saito. The structure of $\PM$-monoids is described in terms of matched pairs. We can define braid $\PM$-monoid using a presentation for the $\PM$-monoid. As main results, we show that braid $\PM$-monoids are described by geometric braids and we find a solution to the word problem for the braid $\PM$-monoids. 
\end{abstract}

\keywords{monoids; braid monoids; compactifications; matched pairs.}

\maketitle
\tableofcontents

\section{Introduction}
The braid groups and the symmetric groups have deep relations and have rich theories. The braid groups are generalized to the Artin groups and the symmetric groups to the Coxeter groups \cite{BK}. We will consider two monoids analogous to the symmetric groups and the braid groups, respectively. We first define a monoid $\mathscr{R}_n$, which we call a $\PM$-monoid, and the $\mathbb{P}M$-monoids contain the symmetric groups. The $\mathbb{P}M$-monoids can be seen as analogue to the look monoids defined by L. Solomon \cite{So}. The $\PM$-monoid is obtained in the context of the compactification of projective linear group defined by Mutsumi Saito \cite{S1}. 

The structure of the $\PM$-monoid is described by the matched pair of the symmetric group and the collection of the ordered partition (Proposition \ref{rm}). We show that the $\PM$-monoid has a presentation with generators and relations (Proposition \ref{rep}). This is an analogue of the fact that the look monoid has a presentation with generators and relations \cite{So}.
Using this presentation we define a braid $\PM$-monoid denoted by $\mathscr{RB}_n$ (Definition \ref{Bpm}). The braid $\mathbb{P}M$-monoid is an analogue to the inverse braid monoid defined by D. Easdown and T. G. Lavers \cite{EL}. As the main results, we show that the braid $\PM$-monoid has a presentation by geometric braids and contains the braid group (Theorem \ref{pmb}). This is an analogue of the fact that the braid groups and the inverse braid monoids have the presentation by the geometric braids \cite{KT},\cite{EL}. Moreover we shall find a solution to the word problem of the braid $\PM$-monoid (Theorem \ref{wp}). This statement is an analogue of the fact that the braid groups and the inverse braid monoids have a solution to the word problem \cite{GM},\cite{V1}.

This paper is organized as follows. In Section 2, we explain the compactification of the projective linear group defined by Mutsumi Saito. This section is also a survey of this compactification. In Section 3.1, we review the linear algebraic monoids. In Section 3.2, we define the $\PM$-monoid, and we reveal the structure. In Section 3.3, we study the properties of $\mathbb{P}M$-monoids. In Section 3.4, we construct a presentation for $\mathbb{P}M$-monoids. In Section 3.5, we review braid groups and  inverse braid monoids. In Section 3.6, we introduce the braid $\PM$-monoid. In Section 4.1, we will show that the braid $\PM$-monoids have a presentation by geometric braids. In Section 4.2, we find a solution to the word problem of the braid $\PM$-monoids. 

\section{Compactification of $\PGL$}
We explain the compactification of the projective linear group constructed by M. Saito \cite{S1}. 
\subsection{Motivation}
One strategy of compactification is constructing a ``limit''. Then we consider the set of all limit points and introduce a topology compatible with the limit. For instance Y. A. Neretin constructed a compactification of the projective linear group by this strategy called hinge \cite{N1}. 

Let $V$ be an $n$-dimensional vector space over $\mathbb{C}$ and $A_i\in \End(V)$, $(i=1,2,\dots)$. We define the linear map
\begin{equation} \label{Ae}
A_{\epsilon}:=\displaystyle \sum_{i=0}^{m} A_i \epsilon^i
\end{equation}
such that (\ref{Ae}) is in $\GL(V)$ for $\epsilon\neq 0$. Dividing by nonzero scalar matrices we consider the projective linear map
\begin{equation} \label{Ae-}
\overline{A_{\epsilon}}\in \PGL(V).
\end{equation}
We want to define a ``limit'' $\displaystyle \lim_{\epsilon\rightarrow 0}\overline{A_{\epsilon}}$. To define a limit, we observe the action of $\overline{A_{\epsilon}}$ on $\mathbb{P}(V)$. For $\overline{x}\in\mathbb{P}(V)$ we have 
\begin{equation*}
\displaystyle \lim_{\epsilon\rightarrow 0}\overline{A_{\epsilon}(x)}=
\begin{cases}
\overline{A_0x} & (x\not\in \Ker A_0) \\
\overline{A_1x} & (x\not\in \Ker A_0\backslash \Ker A_1) \\
\overline{A_2x} & (x\not\in \Ker A_0\cap \Ker A_1\backslash \Ker A_2) \\
\vdots  & \,\,\,\,\,\,\,\,\,\,\,\,\,\,\,\,\,\,\,\,\,\,\,\,\,\,\,\,\,\,\,\,\,\,\,\,\,\,\,\,\,\,\,\,\,\,\,\,\,\,\,\,\,\,\,\,\,\,\,\,\,\,\,\,\,\,\,\,\,\,\,\,\,\,\,.
\end{cases}
\end{equation*}
Thus we define the limit of (\ref{Ae-}) as
\begin{equation} \label{li}
\displaystyle \lim_{\epsilon\rightarrow 0}\overline{A_{\epsilon}}:=(\overline{A_0},\overline{A_1}|_{\mathbb{P}(\Ker A_0)},\overline{A_2}|_{\mathbb{P}(\Ker A_0\cap \Ker A_1)},\dots).
\end{equation}
\subsection{Definition of $\mathbb{P}M$}
In order to construct a compactification of the projective linear group, we consider the set of the form of the right hand side of (\ref{li}). We define the following sets.
Let $V$ be an $n$- dimensional vector space over $\mathbb{C}$. Set
\begin{equation*}
M:=M(V)=\left\{(A_0,A_1,\dots,A_m)\mid 
\begin{matrix} m=0,1,2,\dots \\ 0\neq A_i\in \Hom(V_i,V)\,\, (0\le i\le m) \\ V_0=V, V_{m+1}=0 \\ V_{i+1}=\Ker (A_i)\,\,(0\le i\le m)
\end{matrix}
\right\}
\end{equation*}
and
\begin{equation*}
\widetilde{M}:=\widetilde{M}(V)=\left\{(A_0,A_1,\dots,A_m)\mid 
\begin{matrix} m=0,1,2,\dots \\ 0\neq A_i\in \End(V)\,\, (0\le i\le m) \\ \displaystyle \cap_{k=0}^{i-1}\Ker A_k\not\subseteq \Ker A_i \\ \displaystyle \cap_{k=0}^{m}\Ker A_k=0
\end{matrix}
\right\}.
\end{equation*}
Let $\mathcal{A}:=(A_0,A_1,\dots,A_m)\in M$. Since $A_i\in \Hom(V_i,V)\backslash \{0\}$, we can consider the element $\overline{A_i}\in \mathbb{P}\Hom(V_i,V)$ represented by $A_i$, and we can define
\begin{equation*}
\mathbb{P}\mathcal{A}:=(\overline{A_0},\overline{A_1},\dots,\overline{A_m}).
\end{equation*}
Let $\mathbb{P}M=\mathbb{P}M(V)$ denote the image of $M$ under $\mathbb{P}$. $\mathbb{P}\widetilde{M}$ can be defined in the similar way.

\subsection{Topology of $\mathbb{P}M$}
We introduce a topology in $\PM$ which we can deal with the limit (\ref{li}). We fix a Hermitian inner product on $V$. Let $W$ be a subspace of $V$. By considering $V=W\oplus W^{\bot}$ via this inner product, we regard $\Hom(W,V)$ as a subspace of $\End(V)$. We consider the classical topology in $\mathbb{P}\Hom(W,V)$ for any subspace $W$ of $V$. \\
Let $\mathbb{A}=(A_0,A_1,\dots,A_m)\in M$. Then $A_i\in\Hom(V_i,V)$, where $V_i=V(\mathbb{A})_i=\Ker(A_{i-1})$. Let $U_i$ be a neighborhood of $\overline{A_i}$ in $\mathbb{P}\Hom(V(\mathbb{A})_i, V)$. Then set 
\begin{equation} \label{nbd}
U_{\mathbb{PA}}(U_0,\dots,U_m)
=\left\{\mathbb{PB}=(\overline{B_0},\overline{B_1},\dots,\overline{B_n})\mid 
\begin{matrix} ^{\forall}i=1,\dots,m, ^{\exists}j\in \{1,\dots,n\} \text{ s.t. } \\ V(\mathbb{B})_j\supseteq V(\mathbb{A})_i \text{ and } 
\\ \overline{B_j|_{V(\mathbb{A})_i}}\in U_i
\end{matrix}
\right\}.
\end{equation}
We will explain why the sets (\ref{nbd}) define the topology that can deal with the limit (\ref{li}) by using the following example. 
\begin{example}
Let V be a 4-dimensional vector space over $\mathbb{C}$. Taking the standard basis, we identify $V\cong \mathbb{C}^4$. Let 
\begin{equation*}
\mathbb{A}=(A_0,A_1,A_2,A_3)=\left(
\begin{pmatrix} 
1&0&0&0 \\
0&0&0&0 \\
0&0&0&0 \\
0&0&0&0
\end{pmatrix},
\begin{pmatrix}
0&0&0 \\
1&0&0 \\
0&0&0 \\
0&0&0
\end{pmatrix},
\begin{pmatrix}
0&0 \\
0&0 \\
1&0 \\
0&0
\end{pmatrix},
\begin{pmatrix}
0 \\
0 \\
0 \\
1
\end{pmatrix}
  \right),
\end{equation*}
\begin{equation*}
\mathbb{B}(t)=(B_0(t),B_1(t))=\left(
\begin{pmatrix} 
1&0&0&0 \\
0&t&0&0 \\
0&0&0&0 \\
0&0&0&0
\end{pmatrix},
\begin{pmatrix}
0&0 \\
0&0 \\
1&0 \\
0&t
\end{pmatrix}
  \right),
\end{equation*}
and let $U_i$ be a neighborhood of $A_i$, ($i=0,1,2,3$).  
In the rule of (\ref{li}), $\mathbb{B}(t)$ converges to $\mathbb{A}$ when $t\rightarrow 0$. In terms of (\ref{nbd}), we want to have 
\begin{equation} \label{wa}
\mathbb{B}(t)\in U_{\mathbb{PA}}(U_0,U_1,U_2,U_3)
\end{equation}
when $t<<0$. In fact (\ref{wa}) holds by the following.
\begin{equation*}
V(\mathbb{B})_0\supseteq V(\mathbb{A})_0, \,\, \overline{B_0(t)|_{V(\mathbb{A})_0}}\in U_0 \text{ since }\Lim\overline{B_0(t)|_{V(\mathbb{A})_0}}=
\begin{pmatrix}
1&0&0&0 \\
0&0&0&0 \\
0&0&0&0 \\
0&0&0&0
\end{pmatrix} ,
\end{equation*}
\begin{equation*}
V(\mathbb{B})_0\supseteq V(\mathbb{A})_1, \,\,\overline{B_0(t)|_{V(\mathbb{A})_1}}\in U_1 \text{ since }\Lim\overline{B_0(t)|_{V(\mathbb{A})_1}}=
\begin{pmatrix}
0&0&0 \\
1&0&0 \\
0&0&0 \\
0&0&0
\end{pmatrix} ,
\end{equation*}
\begin{equation*}
V(\mathbb{B})_1\supseteq V(\mathbb{A})_2, \,\,\overline{B_1(t)|_{V(\mathbb{A})_2}}\in U_2 \text{ since }\Lim\overline{B_1(t)|_{V(\mathbb{A})_2}}=
\begin{pmatrix}
0&0 \\
0&0 \\
1&0 \\
0&0
\end{pmatrix} ,
\end{equation*}
\begin{equation*}
V(\mathbb{B})_1\supseteq V(\mathbb{A})_3, \,\,\overline{B_1(t)|_{V(\mathbb{A})_3}}\in U_3 \text{ since }\Lim\overline{B_1(t)|_{V(\mathbb{A})_3}}=
\begin{pmatrix}
0 \\
0 \\
0 \\
1
\end{pmatrix} .
\end{equation*}
\end{example}
In fact (\ref{nbd}) induces a topology on $\PM$ by the following lemma.
\begin{lemma}[\cite{S1} Lemma 3.2.]
The sets 
\begin{equation*}
\{U_{\mathbb{PA}}(U_0,\dots,U_m)|U_i \text{ is a neighborhood of } \overline{A_i}\,\,(0\le i\le m)\}
\end{equation*}
satisfy the axiom of a base of neighborhoods of $\mathbb{PA}$, and hence define a topology in $\PM$.
\end{lemma}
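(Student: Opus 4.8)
The plan is to verify that for each $\mathbb{PA}$ the family $\{U_{\mathbb{PA}}(U_0,\dots,U_m)\}$ satisfies the standard axioms for a system of fundamental neighborhoods, namely: (N1) $\mathbb{PA}\in U_{\mathbb{PA}}(U_0,\dots,U_m)$ for every admissible choice of the $U_i$; (N2) any two basic neighborhoods of $\mathbb{PA}$ contain a common basic neighborhood; and (N3) each basic neighborhood $U$ of $\mathbb{PA}$ contains a basic neighborhood of each of its own points. Granting (N1)--(N3), one declares $O\subseteq\PM$ to be open exactly when it contains a basic neighborhood of each of its points, and this is a topology having the given sets as neighborhood bases. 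I expect (N1) and (N2) to be purely formal. For (N1), taking $\mathbb{PB}=\mathbb{PA}$ and $j=i$ in (\ref{nbd}) reduces the membership condition to $V(\mathbb{A})_i\supseteq V(\mathbb{A})_i$ and $\overline{A_i|_{V(\mathbb{A})_i}}=\overline{A_i}\in U_i$, both of which hold. For (N2), given $U_{\mathbb{PA}}(U_0,\dots,U_m)$ and $U_{\mathbb{PA}}(U_0',\dots,U_m')$ I would take the refinement $U_{\mathbb{PA}}(U_0\cap U_0',\dots,U_m\cap U_m')$: each $U_i\cap U_i'$ is again a neighborhood of $\overline{A_i}$ in the classical topology of $\mathbb{P}\Hom(V(\mathbb{A})_i, V)$, and if $\mathbb{PB}$ lies in the refinement then its witnessing index $j$ for each $i$ places $\overline{B_j|_{V(\mathbb{A})_i}}$ in both $U_i$ and $U_i'$, so $\mathbb{PB}$ lies in both original neighborhoods.

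The substance is (N3). Fix $U=U_{\mathbb{PA}}(U_0,\dots,U_m)$ and let $\mathbb{PB}\in U$; I will produce a basic neighborhood of $\mathbb{PB}$ contained in $U$ (so in the axiom one may simply take the witnessing neighborhood of $\mathbb{PA}$ to be $U$ itself). For each $i$, membership guarantees an index $j=j(i)$ with $V(\mathbb{B})_{j(i)}\supseteq V(\mathbb{A})_i$ and $\overline{B_{j(i)}|_{V(\mathbb{A})_i}}\in U_i$. The key tool is the restriction map: since $V(\mathbb{A})_i\subseteq V(\mathbb{B})_{j(i)}$, the linear map $\Hom(V(\mathbb{B})_{j(i)}, V)\to\Hom(V(\mathbb{A})_i, V)$ is continuous, and because $B_{j(i)}|_{V(\mathbb{A})_i}\neq 0$ it descends to a continuous partial map of projective spaces defined near $\overline{B_{j(i)}}$ and sending $\overline{B_{j(i)}}$ to $\overline{B_{j(i)}|_{V(\mathbb{A})_i}}\in U_i$. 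By continuity I can pull $U_i$ back to a neighborhood $W^{(i)}$ of $\overline{B_{j(i)}}$ in $\mathbb{P}\Hom(V(\mathbb{B})_{j(i)}, V)$ on which restriction to $V(\mathbb{A})_i$ lands in $U_i$. Setting $W_j:=\bigcap_{i:\,j(i)=j} W^{(i)}$ (a finite intersection, hence a neighborhood of $\overline{B_j}$) for those $j$ in the image of $j(\cdot)$, and $W_j:=\mathbb{P}\Hom(V(\mathbb{B})_j, V)$ otherwise, gives the candidate neighborhood $U_{\mathbb{PB}}(W_0,\dots,W_n)$.

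It then remains to check $U_{\mathbb{PB}}(W_0,\dots,W_n)\subseteq U$. Take $\mathbb{PC}$ in this neighborhood and fix $i$ with its index $j=j(i)$; membership of $\mathbb{PC}$ furnishes $k$ with $V(\mathbb{C})_k\supseteq V(\mathbb{B})_{j}\supseteq V(\mathbb{A})_i$ and $\overline{C_k|_{V(\mathbb{B})_j}}\in W_j\subseteq W^{(i)}$. Transitivity of restriction, $(C_k|_{V(\mathbb{B})_j})|_{V(\mathbb{A})_i}=C_k|_{V(\mathbb{A})_i}$, together with the defining property of $W^{(i)}$ yields $\overline{C_k|_{V(\mathbb{A})_i}}\in U_i$; since $i$ was arbitrary, $\mathbb{PC}\in U$. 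The main obstacle I anticipate lies precisely in this last axiom: one must be certain the projectivized restriction maps are genuinely continuous at the relevant points, which rests on the nonvanishing $B_{j(i)}|_{V(\mathbb{A})_i}\neq 0$ (itself forced by $\overline{B_{j(i)}|_{V(\mathbb{A})_i}}\in U_i\subseteq\mathbb{P}\Hom$), and one must track the chain of subspace inclusions carefully so that a single index $k$ simultaneously witnesses the condition relating $\mathbb{C}$ to $\mathbb{A}$ through the intermediate filtration of $\mathbb{B}$.
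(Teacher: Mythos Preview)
The paper does not supply its own proof of this lemma; it merely cites \cite{S1} Lemma~3.2 and states the result. So there is nothing in the present paper to compare your argument against.

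That said, your verification is correct and is the standard argument one would expect. The checks of (N1) and (N2) are immediate as you say. For (N3) your use of the projectivized restriction map $\mathbb{P}\Hom(V(\mathbb{B})_{j(i)},V)\dashrightarrow\mathbb{P}\Hom(V(\mathbb{A})_i,V)$ is exactly the right idea: it is defined and continuous on the Zariski (hence classical) open set where the restriction is nonzero, and the hypothesis $\overline{B_{j(i)}|_{V(\mathbb{A})_i}}\in U_i$ forces $\overline{B_{j(i)}}$ to lie in that domain, so the pullback $W^{(i)}$ exists. The chain $V(\mathbb{C})_k\supseteq V(\mathbb{B})_{j(i)}\supseteq V(\mathbb{A})_i$ together with transitivity of restriction then closes the argument. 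Two small remarks: you may wish to state explicitly that $W^{(i)}$ is chosen inside the domain of definition of the projectivized restriction (so that the implication ``$\overline{D}\in W^{(i)}\Rightarrow\overline{D|_{V(\mathbb{A})_i}}\in U_i$'' is literally well-posed), and note that your (N3) is slightly stronger than the minimal axiom---you show every basic neighborhood is already open---which is harmless and in fact convenient.
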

Moreover the following theorem holds.
\begin{theorem}[\cite{S1} Theorem 5.1., Proposition 3.9, 3.10.]
The set $\PM$ is compact, and $\PGL(V)$ is dense open in $\PM$.
\end{theorem}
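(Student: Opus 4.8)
The plan is to establish three things separately: that $\PGL(V)$ is open in $\PM$, that it is dense, and that $\PM$ is compact. The first is elementary, while the last two share a common technical core, namely the limit construction (\ref{li}).

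First I would prove openness. An element of $\PGL(V)$ corresponds to a tuple of length $m=0$, say $\mathbb{A}=(g)$ with $g\in\GL(V)$, so that $\mathbb{PA}=(\overline g)$ and $V(\mathbb{A})_0=V$, $V(\mathbb{A})_1=\Ker g=0$. Take the basic neighborhood $U_{\mathbb{PA}}(U_0)$ with $U_0$ a neighborhood of $\overline g$ in $\mathbb P\Hom(V,V)=\mathbb P\End(V)$; since $\GL(V)$ is open in $\End(V)$ we may shrink $U_0$ so that $U_0\subseteq\mathbb P\GL(V)$. For any $\mathbb{PB}$ in this neighborhood, the defining condition of (\ref{nbd}) applied at the top level $V(\mathbb{A})_0=V$ (the level $i=0$, as in the Example) yields an index $j$ with $V(\mathbb{B})_j\supseteq V$ and $\overline{B_j|_{V}}\in U_0$; as $V(\mathbb{B})_j\subseteq V$ this forces $V(\mathbb{B})_j=V$, hence $j=0$ and $\overline{B_0}\in U_0\subseteq\mathbb P\GL(V)$. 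Then $B_0$ is invertible, $\Ker B_0=0$, so $\mathbb{PB}$ has length $0$, i.e. $\mathbb{PB}\in\PGL(V)$. Therefore $U_{\mathbb{PA}}(U_0)\subseteq\PGL(V)$, proving openness.

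Next, density. Given $\mathbb{PA}=(\overline{A_0},\dots,\overline{A_m})$ with $A_i\in\Hom(V_i,V)$ and $V_i=V(\mathbb{A})_i$, I would realize $\mathbb{PA}$ as a genuine limit (\ref{li}): choose endomorphisms $\widehat A_i\in\End(V)$ extending $A_i$ (so $\widehat A_i|_{V_i}=A_i$) and set $A_\epsilon=\sum_{i=0}^m\epsilon^i\widehat A_i$ as in (\ref{Ae}). Because $V_i\subseteq\Ker A_k=V_{k+1}$ for every $k<i$, the lower terms vanish on $V_i$, so $A_\epsilon|_{V_i}=\epsilon^i(\widehat A_i+O(\epsilon))|_{V_i}$ and hence $\overline{A_\epsilon|_{V_i}}\to\overline{A_i}$; this is exactly the assertion that $\mathbb{P}A_\epsilon\in U_{\mathbb{PA}}(U_0,\dots,U_m)$ once $\epsilon$ is small, i.e. $\overline{A_\epsilon}\to\mathbb{PA}$. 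The one point requiring care is that the $\widehat A_i$ must be chosen so that $A_\epsilon\in\GL(V)$ for small $\epsilon\neq0$: extending the $A_i$ by zero on $V_i^{\perp}$ can make $\det A_\epsilon\equiv 0$ (already for $n=2$, with $A_0=\mathrm{diag}(1,0)$ and $A_1\colon\Ker A_0\to\im A_0$), so one must exploit the freedom in $\widehat A_i$ on the complements $V_i^{\perp}$ to keep $\det A_\epsilon\not\equiv 0$. This is precisely the passage between $M$ and $\widetilde M$ supplied by the cited results of Saito.

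The hard part is compactness. First, lengths are bounded: $A_i\neq 0$ forces $\dim V_{i+1}<\dim V_i$, so $n=\dim V_0>\dim V_1>\dots>\dim V_{m+1}=0$ and $m+1\le n$. Thus every $\mathbb{PA}$ is recorded by its chain $V_0\supseteq\dots\supseteq V_{m+1}$ (a point of one of finitely many flag varieties, each sitting inside a product of Grassmannians $\Gr(d,V)$) together with the points $\overline{A_i}\in\mathbb P\Hom(V_i,V)\subseteq\mathbb P\End(V)$; this gives a natural continuous injection of $\PM$ into a finite union of products of compact spaces, and I would check that it is an embedding. It then suffices to show the image is closed, equivalently (the neighborhood basis (\ref{nbd}) being first countable) that the limit of the recorded data of a convergent sequence $\mathbb{PA}^{(k)}$ again lies in $\PM$ and is the limit in the topology of $\PM$. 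I would pass to a subsequence of constant type along which the flags $V_i^{(k)}$ and the projectivized maps $\overline{A_i^{(k)}}$ all converge, say to $W_\bullet$ and $\overline{C_\bullet}$. The subtlety — and the main obstacle — is that $(W_\bullet,\overline{C_\bullet})$ need not itself be admissible, because in the limit some $C_i|_{W_i}$ may acquire a strictly larger kernel, so that the flag must be refined and new intermediate restrictions inserted. Reconstructing the true limit amounts to running the rule (\ref{li}) on this degenerate data, level by level extracting the successive nonzero restrictions to produce a bona fide $\mathbb{PB}\in\PM$, and then verifying $\mathbb{PA}^{(k)}\to\mathbb{PB}$ against (\ref{nbd}). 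Controlling this degeneration is exactly the phenomenon that distinguishes $\PM$ from $\PGL(V)$, and it is where the real work (Saito's Proposition 3.9, 3.10 and Theorem 5.1) lies; granting it, openness, density and compactness together yield the theorem.
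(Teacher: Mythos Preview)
The paper does not supply a proof of this theorem: Section~2 is explicitly a survey of Saito's construction, and the statement is simply quoted from \cite{S1} (Theorem 5.1 and Propositions 3.9, 3.10) with no argument given. There is therefore no proof in the present paper to compare your proposal against.

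As a stand-alone sketch your outline is reasonable. The openness argument is essentially complete and correct (modulo the indexing slip in the paper's display~(\ref{nbd}), which you rightly resolve by appeal to the Example). For density and compactness you correctly isolate the two genuine difficulties --- arranging invertibility of the approximating pencil $A_\epsilon$, and handling the degeneration (kernel growth) that forces a refinement of the flag when passing to subsequential limits --- but you do not actually resolve either of them; at both points you defer to Saito's Propositions 3.9, 3.10 and Theorem 5.1. That is honest, and it matches exactly what the paper itself does, but it means your proposal is, like the paper, ultimately a citation rather than a proof.
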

Here we regard an element of $\PGL(V)$ as a one-term element of $\PM$, and $\PM$ is a compactification of $\PGL(V)$. 
\subsection{Monoid structure of $\mathbb{P}\widetilde{M}$}
For $\mathbb{A}=(A_0,A_1,\dots,A_m)$, $\mathbb{B}=(B_0,B_1,\dots,B_n)$$\in\mathbb{P}\widetilde{M}$, define $\mathbb{AB}$ by removing the redundant matrices from
\begin{equation} \label{prod}
\begin{split}
\mathbb{AB}=(A_0B_0,A_1B_0,\dots,&A_mB_0,A_0B_1, \\
&\dots,A_mB_1,\dots,A_0B_n,\dots,A_mB_n).
\end{split}
\end{equation}
This defines a monoid structure on $\mathbb{P}\widetilde{M}$ (\cite{S1} Proposition 6.6.).
\section{$\mathbb{P}M$-monoids and braid $\PM$-monoids}
We shall define the $\mathbb{P}M$-monoid denoted by $\mathscr{R}_n$. This is motivated by the linear algebraic monoid theory.
\subsection{Motivation : Linear algebraic monoids}
Let $K$ be an algebraically closed field. Let $M_n=M_n(K)$ denote the set of all $n\times n$ matrices over $K$.
\begin{definition}
A linear algebraic monod is a submonoid of $M_n$ which is a Zariski closed subset.
\end{definition}
Let $M$ be a reductive monoid, i.e., $M$ is a linear algebraic monoid which is irreducible as algebraic set and has a connected reductive group of units. Let $T$ be a maximal torus of $G$. Then 
\begin{equation*}
R=\overline{N_{G}(T)}/T
\end{equation*}
is called a Renner monoid (\cite{Pu} Definition 11.2), where the closure is taken in Zariski topology. This contains the Weyl group $W=N_G(T)/T$ of $G$. Renner monoids play the central role in linear algebraic monoid theory like Weyl groups do in linear algebraic group theory, and have the following properties. Let $E(M)$ be the set of idempotents of $M$, and $P(e)=\{g\in G\mid ge=ege\}$ for $e\in E(M)$. Let $B$ be a Borel subgroup containing $T$, and $\Lambda(B)=\{e\in E(\overline{T})\mid P(e)\supseteq B\}$. We shall define an inverse monoid.
\begin{definition}
An inverse monoid is a monoid $M$ such that, for each $x\in M$, there is a unique $y\in M$ such that
\begin{equation*}
xyx=x, \,\, {\rm{and}} \,\, yxy=y.
\end{equation*}
\end{definition}
\begin{theorem}[\cite{So1} Theorem 5.10.] \label{thh}
Let $M$ be a reductive monoid, $e\in \Lambda(B)$. Then 
\begin{itemize}
\item[{\rm{(1)}}]$R$ is a finite inverse monoid.
\item[{\rm{(2)}}]The group of units of $R$ is $W$, and $R=WE(R)$.
\item[{\rm{(3)}}]$E(R)\simeq E(\overline{T})$.
\item[{\rm{(4)}}]$M=\displaystyle \sum_{\rho\in R}B\rho B$, and $B\rho B=B\rho^{\prime}B \,\, {\rm{implies}} \,\, \rho=\rho^{\prime}$. 
\item[{\rm{(5)}}]If $s\in S$ is a Coxeter generator, then $BsB \cdot B\rho B\subseteq Bs\rho B\cup B\rho B$.
\item[{\rm{(6)}}]$GeG=\displaystyle \sum_{\rho\in WeW}B\rho B$.
\item[{\rm{(7)}}]If $w_0\in W$ is the opposite element, then $Bw_0eB$ is open and dense in $GeG$.
\end{itemize}
\end{theorem}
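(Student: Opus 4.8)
The plan is to reduce the entire statement to the combinatorics of the torus embedding $\overline{T}$ together with the Bruhat decomposition of the reductive group $G$, since every clause of the theorem ultimately records how the $B\times B$-orbit structure on $M$ is governed by $R$. First I would set up the idempotent picture underlying (3): using that $\overline{T}$ is an affine toric variety for $T$, I would identify $E(\overline{T})$ with the face lattice of the associated cone, observe that it is a finite commutative idempotent monoid (a semilattice), and then prove that every idempotent of $M$ is $G$-conjugate, in fact $W$-conjugate, into $E(\overline{T})$. This ``diagonalization of idempotents'' is the bridge that yields $E(R)\simeq E(\overline{T})$ and at the same time forces a normal form for elements of $R$.

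For (1) and (2) I would prove the factorization $\overline{N_G(T)}=N_G(T)\cdot\overline{T}$ (each coset $w\overline{T}$ is the closure of $wT$, a unit translate of $\overline{T}$), whence $R=\overline{N_G(T)}/T=W\,E(\overline{T})=E(\overline{T})\,W$. Finiteness of $R$ is then immediate from finiteness of $W$ and of $E(\overline{T})$, and the invertible elements are exactly those with trivial idempotent part, i.e. $W$, giving $R=WE(R)$. The inverse-monoid property in (1) I would handle by writing $\rho=we$ with $w\in W$, $e\in E(\overline{T})$, taking the candidate inverse $w^{-1}e$ (equivalently $e'w^{-1}$ with $e'=wew^{-1}$), checking $\rho\rho^{-1}\rho=\rho$ and $\rho^{-1}\rho\rho^{-1}=\rho^{-1}$ directly in $R$, and deriving \emph{uniqueness} from commutativity of $E(\overline{T})$ by the standard semilattice argument that a regular monoid whose idempotents commute is inverse.

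The arithmetic heart is the Bruhat--Renner decomposition (4). Here I would follow Renner's strategy: analyse the geometry of the big cell $\overline{B^-}\,\overline{B}$ (with $B^-$ the opposite Borel) and the limits $\lim_{t\to 0}\lambda(t)\,x$ along one-parameter subgroups $\lambda$, so as to attach to each $x\in M$ a well-defined $\rho\in R$ with $x\in B\rho B$, yielding $M=\sum_{\rho\in R}B\rho B$. Disjointness, i.e. the implication $B\rho B=B\rho'B\Rightarrow\rho=\rho'$, I would obtain by pushing the two representatives into $\overline{T}$ and comparing their images under the torus action, so that equality of cells forces equality of the associated faces together with the Weyl-coset data. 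I expect this disjointness step to be the main obstacle, since it is precisely where the monoid theory genuinely extends the group case and where the face lattice of $E(\overline{T})$ must be matched to the cell geometry one stratum at a time.

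Finally, (5) follows by extending the group relation $BsB\cdot BgB\subseteq BsgB\cup BgB$: working inside the minimal parabolic $P_s=B\sqcup BsB$ reduces the product $BsB\cdot B\rho B$ to a rank-one computation, after which one tracks how the idempotent part of $\rho$ is absorbed to land in $Bs\rho B\cup B\rho B$. For (6)--(7) I would combine (4) with the fact that the cells $B\rho B$ lying in $GeG$ are indexed exactly by the two-sided orbit $WeW\subseteq R$; then $Bw_0eB$ is singled out as the unique open dense cell by a dimension count, $w_0$ being the longest element, which opens up the full big cell. Throughout, the recurring technique is to conjugate by $W$ into $\overline{T}$ and read the answer off the face lattice, so that the only genuinely hard input is the disjointness in (4).
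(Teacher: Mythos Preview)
The paper does not prove this theorem at all: it is quoted verbatim as \cite{So1} Theorem~5.10 and serves only as motivation in Section~3.1 for the definition of the $\mathbb{P}M$-monoid $\mathscr{R}_n$. There is therefore no ``paper's own proof'' to compare your proposal against.

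That said, your outline is a reasonable sketch of the standard Renner--Solomon argument. The ingredients you list---diagonalisation of idempotents into $E(\overline{T})$, the factorisation $\overline{N_G(T)}=N_G(T)\cdot\overline{T}$ giving $R=WE(\overline{T})$, the semilattice argument for the inverse property, the one-parameter-subgroup limits for the Bruhat--Renner decomposition, and the minimal-parabolic reduction for (5)---are indeed the moving parts of the published proof. Your assessment that the disjointness in (4) is the crux is also accurate. If you were actually asked to supply a proof here, the honest answer is that this is a substantial theorem whose full argument occupies a chapter in Putcha~\cite{Pu} and Solomon~\cite{So1}; a sketch at this level of detail would be appropriate only with explicit pointers to those sources for the hard steps.
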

The definition of $\mathbb{P}M$-monoid $\mathscr{R}_n$ is similar to the Renner monoid $R$.

\subsection{$\mathbb{P}M$-monoids and those structures}
Let $T$ be a maximal torus of $\PGL_n$. Then we consider the following monoid 
\begin{equation*}
\mathscr{R}_n=\overline{N_{\PGL_n}(T)}/T,
\end{equation*}
where the closure is taken in the topology of $\mathbb{P}M$. We call this monoid $\mathscr{R}_n$ a $\mathbb{P}M$-monoid. We next consider the structure of a $\PM$-monoid. Actually, the structure of a $\PM$-monoid can be described in terms of matched pairs. We first explain the matched pairs (cf. \cite{MS},\cite{T}). Let $S$ be a monoid. We denote the unit element of $S$ by $1_S$.
\begin{definition}
Let $S, B$ be monoids which have binary operations $\rightharpoonup:S\times B\rightarrow B$ and $\leftharpoonup:S\times B\rightarrow S$. A matched pair of monoids means a triple $(S,B,\sigma)$, where $S,B$ are monoids and
\begin{equation*}
\sigma:S\times B\rightarrow B\times S, \, (s,b)\mapsto (s\rightharpoonup b,s\leftharpoonup b)
\end{equation*}
is a map satisfying the following conditions : 
\begin{enumerate}
\item[(1)] \label{m1}
$s\rightharpoonup(t\rightharpoonup b)=st\rightharpoonup b$,
\item[(2)] \label{m2}
$st\leftharpoonup b=(s\leftharpoonup(t\rightharpoonup b))(t\leftharpoonup b)$,
\item[(3)] \label{m3}
$(s\leftharpoonup b)\leftharpoonup c=s\leftharpoonup bc$,
\item[(4)] \label{m4}
$s\rightharpoonup bc=(s\rightharpoonup b)((s\leftharpoonup b)\rightharpoonup c)$,
\item[(5)] \label{m5}
$1_S\rightharpoonup b=b$,
\item[(6)] \label{m6}
$s\rightharpoonup 1_B=1_B$,
\item[(7)] \label{m7}
$s\leftharpoonup 1_B=s$,
\item[(8)] \label{m8}
$1_S\leftharpoonup b=1_S$
\end{enumerate}
for $s,t\in S$, $b,c\in B$.
\end{definition}
The product $B\times S$ forms a monoid with product
\begin{equation*}
(b,s)(c,t)=(b(s\rightharpoonup c),(s\leftharpoonup c)t).
\end{equation*}
This monoid is denoted by $B\Join_{\sigma} S$. \\
Let
\begin{equation*}
\begin{split}
P_n=\Biggl\{(\{i_1,\dots,i_{k_1}\},\{i_{k_1+1},\dots,i_{k_2}\},&\dots,\{i_{k_m+1},\dots,i_n\}) \\
&\mid \begin{matrix}\{i_1\dots,i_n\}=\{1,\dots,n\} \\ 1\le k_1<k_2<\dots<k_{m-1}< n\end{matrix} \Biggr\}.
\end{split}
\end{equation*}
The element of $P_n$ is called an ordered set partitions of $[n]:=\{1,2,\dots,n\}$. The set $P_n$ has a monoid structure defined by
\begin{equation*}
(p_1,\dots,p_m)*(p_1^{\prime},\dots,p^{\prime}_{m^{\prime}}):=(p_1\cap p_1^{\prime},\dots,p_m\cap p_1^{\prime},\dots,p_1\cap p^{\prime}_{m^{\prime}},\dots,p_m\cap p^{\prime}_{m^{\prime}}).
\end{equation*}
Then the following proposition holds.
\begin{proposition} \label{rm}
Let $\mathscr{R}_n$ be the $\PM$-monoid, $S_n$ the symmetric group and $P_n$ the collection of the ordered set partitions of $[n]$. Define a map
\begin{equation*}
\varphi:P_n\times S_n\rightarrow S_n\times P_n,\, ((p_1,\dots,p_m),w)\mapsto (w,(w^{-1}(p_1),\dots,w^{-1}(p_m))).
\end{equation*}
Then
\begin{equation*}
\mathscr{R}_n\simeq S_n\Join_{\varphi} P_n.
\end{equation*}
\end{proposition}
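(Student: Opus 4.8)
The plan is to make the correspondence $(v,P)\mapsto$ an explicit element of $\mathscr R_n$ precise, check it is a bijection onto the closure, and then match the two multiplications. First I would describe, for a pair $(v,P)\in S_n\times P_n$ with $P=(p_1,\dots,p_r)$, the candidate element of $\mathscr R_n$: working with endomorphism representatives (that is, inside $\mathbb{P}\widetilde{M}$, via the identification of $\PM$ with $\mathbb{P}\widetilde{M}$), set $A_i=\sum_{j\in p_{i+1}}E_{v(j)\,j}$ for $0\le i\le r-1$, where $E_{ab}$ denotes the matrix unit, and let $\mathbb{A}(v,P)=(\overline{A_0},\dots,\overline{A_{r-1}})$. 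To see $\mathbb{A}(v,P)\in\mathscr R_n=\overline{N_{\PGL_n}(T)}/T$, I would exhibit it as a limit of the form (\ref{li}): take the monomial family $vD(\epsilon)$ with $D(\epsilon)=\mathrm{diag}(\epsilon^{a_1},\dots,\epsilon^{a_n})$ and $a_j=i$ whenever $j\in p_{i+1}$; grouping the columns by order of vanishing reproduces exactly $(\overline{A_0},\overline{A_1}|_{\mathbb{P}(\Ker A_0)},\dots)=\mathbb{A}(v,P)$, and passing to the $T$-quotient is absorbed by normalizing the nonzero entries to $1$. Conversely, since every element of $N_{\PGL_n}(T)$ is a projectivized monomial matrix and $S_n$ is finite, every convergent net in $N_{\PGL_n}(T)/T$ has eventually constant permutation part and, by the neighbourhood description (\ref{nbd}), degenerates block-by-block in this monomial fashion; reading off the permutation and the kernel filtration recovers a unique $(v,P)$. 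This yields a bijection $\Phi\colon S_n\times P_n\to\mathscr R_n$, $(v,P)\mapsto\mathbb{A}(v,P)$.

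Next I would compute the product in $\mathscr R_n$. Given $(v,P)$ and $(w,Q)$ with $P=(p_1,\dots,p_r)$, $Q=(q_1,\dots,q_s)$, I use (\ref{prod}). With $A_i=\sum_{j\in p_{i+1}}E_{v(j)\,j}$ and $B_l=\sum_{k\in q_{l+1}}E_{w(k)\,k}$, the relation $E_{ab}E_{cd}=\delta_{bc}E_{ad}$ gives
\[
A_iB_l=\sum_{k\in w^{-1}(p_{i+1})\cap q_{l+1}}E_{vw(k)\,k},
\]
so each nonzero product is again a monomial partial matrix, with a single underlying permutation $vw$ and support the index set $w^{-1}(p_{i+1})\cap q_{l+1}$.

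The decisive point is then the bookkeeping. The list (\ref{prod}) runs through the pairs $(i,l)$ with $l$ major and $i$ minor, which is precisely the order in which the $*$-product enumerates the blocks $w^{-1}(p_a)\cap q_b$ of $(w^{-1}P)*Q$, where $w^{-1}P$ denotes $P$ with $w^{-1}$ applied to each block. Moreover the nonzero supports $w^{-1}(p_{i+1})\cap q_{l+1}$ are pairwise disjoint, being intersections of two set partitions of $[n]$; hence a product $A_iB_l$ is redundant (its kernel fails to drop the running intersection of kernels) exactly when it is zero, i.e.\ exactly when the corresponding block is empty. Therefore ``removing the redundant matrices'' from (\ref{prod}) coincides termwise with dropping the empty blocks in the definition of $*$, and
\[
\mathbb{A}(v,P)\,\mathbb{A}(w,Q)=\mathbb{A}\bigl(vw,\ (w^{-1}P)*Q\bigr).
\]
Comparing with the matched-pair product $(v,P)(w,Q)=\bigl(v(P\rightharpoonup w),(P\leftharpoonup w)*Q\bigr)=\bigl(vw,(w^{-1}P)*Q\bigr)$ read off from $\varphi$, together with $\Phi(1_{S_n},([n]))=(\overline{I})$, shows that $\Phi$ is a monoid isomorphism.

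I expect the main obstacle to be the first step: pinning down the closure $\overline{N_{\PGL_n}(T)}$ inside $\PM$ and the passage to the $T$-quotient. Showing that every limit point has the monomial-degeneration form $\mathbb{A}(v,P)$ (and that the closure is no larger) requires genuine use of the topology (\ref{nbd}) and compactness, rather than the purely algebraic bookkeeping of the product computation. The remaining steps are routine once the bijection is in hand, the only real care being the order-and-redundancy matching described above.
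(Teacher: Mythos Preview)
Your proposal is correct and follows essentially the same route as the paper: identify $\overline{N_{\PGL_n}(T)}/T$ with tuples $\bigl(\sum_{j\in p_1}E_{\pi(j)j},\dots,\sum_{j\in p_m}E_{\pi(j)j}\bigr)$, set up the bijection with $S_n\times P_n$, compute the product via (\ref{prod}) using $E_{ab}E_{cd}=\delta_{bc}E_{ad}$, and match the result with the matched-pair product induced by $\varphi$. If anything, you are more careful than the paper about the closure step---the paper simply asserts the description of $\overline{N_{\PGL_n}(T)}$ and moves on, whereas you supply the limit $vD(\epsilon)$ and the converse net argument---and about why ``removing redundant matrices'' coincides with dropping empty blocks in the $*$-product; both points are implicit in the paper's computation.
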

\begin{proof}
Since $N_{\PGL_n}(T)=\{\displaystyle \sum_{j=1}^n t_jE_{\pi(j)j}\mid t_j\in\mathbb{C}^*,\pi\in S_n\}$, we have
\begin{equation*}
\overline{N_{\PGL_n}(T)}=\left\{\left(\displaystyle \sum_{j\in p_1}t_jE_{\pi(j)j},\dots,\displaystyle \sum_{j\in p_m}t_jE_{\pi(j)j}\right)\mid 
\begin{matrix}
t_j\in\mathbb{C}^*, \\
(p_1,\dots,p_m)\in P_n,\pi\in S_n
\end{matrix}
\right\}.
\end{equation*}
Thus
\begin{equation*}
\overline{N_{\PGL_n}(T)}/T=\left\{\left(\displaystyle \sum_{j\in p_1}E_{\pi(j)j},\dots,\displaystyle \sum_{j\in p_m}E_{\pi(j)j}\right)\mid (p_1,\dots,p_m)\in P_n,\pi\in S_n\right\}.
\end{equation*}
Then we have the following bijective correspondence as sets.
\begin{equation} \label{mon}
\begin{split}
\overline{N_{\PGL_n}(T)}/T&\simeq S_n\times P_n: \\
\left(\displaystyle \sum_{j\in p_1}E_{\pi(j)j},\dots,\displaystyle \sum_{j\in p_m}E_{\pi(j)j}\right)&\mapsto (\pi,(p_1,\dots,p_m)).
\end{split}
\end{equation}
To introduce a monoid structure on $S_n\times P_n$, we recall a monoid structure of $\overline{N_{\PGL_n}(T)}/T$ (cf. (\ref{prod})).
\begin{equation*}
\begin{split}
&\left(\displaystyle \sum_{j\in p_1}E_{\sigma(j)j},\dots,\displaystyle \sum_{j\in p_m}E_{\sigma(j)j}\right)\cdot\left(\displaystyle \sum_{k\in p_1^{\prime}}E_{\sigma^{\prime}(k)k},\dots,\displaystyle \sum_{k\in p_n^{\prime}}E_{\sigma^{\prime}(k)k}\right) \\
&=\left(\displaystyle \sum_{j\in p_1}E_{\sigma(j)j}\displaystyle \sum_{k\in p_1^{\prime}}E_{\sigma(k)k},\displaystyle \sum_{j\in p_2}E_{\sigma(j)j}\displaystyle \sum_{k\in p_1^{\prime}}E_{\sigma(k)k},\dots,\displaystyle \sum_{j\in p_m}E_{\sigma^{\prime}(j)j}\displaystyle \sum_{k\in p_n^{\prime}}E_{\sigma^{\prime}(k)k}\right)\\
&=\left(\displaystyle \sum_{l\in {\sigma^{\prime}}^{-1}(p_1)\cap p_1^{\prime}}E_{\sigma\sigma^{\prime}(l)l},\displaystyle \sum_{l\in {\sigma^{\prime}}^{-1}(p_2)\cap p_1^{\prime}}E_{\sigma\sigma^{\prime}(l)l},\dots,\displaystyle \sum_{l\in {\sigma^{\prime}}^{-1}(p_m)\cap p_n^{\prime}}E_{\sigma\sigma^{\prime}(l)l}\right).
\end{split}
\end{equation*}
By the above calculation, we define a product on $S_n\times P_n$
\begin{equation} \label{pro}
\begin{split}
(\sigma,(p_1,\dots,p_m))\cdot&(\sigma^{\prime},(p_1^{\prime},\dots,p_n^{\prime})) \\
&:=(\sigma\sigma^{\prime},({\sigma^{\prime}}^{-1}(p_1),\dots,{\sigma^{\prime}}^{-1}(p_m))*(p_1^{\prime},\dots,p_n^{\prime})).
\end{split}
\end{equation}
Then (\ref{mon}) becomes the isomorphism of monoids. On the other hand, we define a map
\begin{equation*}
\varphi:P_n\times S_n\rightarrow S_n\times P_n:((p_1,\dots,p_m),\sigma)\mapsto (\sigma, (\sigma^{-1}(p_1),\dots,\sigma^{-1}(p_m))).
\end{equation*}
Then $(P_n,S_n,\varphi)$ satisfies (1)-(8) in Definition \ref{m1}, and becomes a matched pair. The monoid structure of $S_n\Join_\varphi P_n$ coincides with (\ref{pro}). Therefore
\begin{equation*}
\mathscr{R}_n\simeq S_n\Join_{\varphi} P_n
\end{equation*} 
as monoids.
\end{proof}

\subsection{Properties of $\mathbb{P}M$-monoids}
A $\mathbb{P}M$-monoid has the following properties analogous to Theorem \ref{thh}.
\begin{proposition}
Let $\mathscr{R}_n=\overline{N_{\PGL_n}(T)}/T$, and
\begin{equation*}
\begin{split}
\Lambda_n=\Biggl\{(\displaystyle\sum_{j\in p_1}E_{jj},&\displaystyle\sum_{j\in p_2}E_{jj},\dots,\displaystyle\sum_{j\in p_n}E_{jj}), \\
&\mid
\begin{matrix} (p_1,\dots,p_n)=(\{1,\dots,k_1\},\dots,\{k_{m-1}+1,\dots,n\}) \\ 1\le k_1<k_2<\dots<k_{m-1}< n
\end{matrix}\Biggr\}
\end{split}
\end{equation*}
\begin{itemize}
\item[{\rm{(a)}}]$\mathscr{R}_n$ is a finite inverse monoid. Moreover the number of  its elements is 
\begin{equation} \label{rnu}
\begin{split}
\,\,&|\mathscr{R}_n| \\
&=\displaystyle\sum_{r_1+\dots+r_m=n}{\begin{pmatrix}n \\ r_1\end{pmatrix}}^2r_1!{\begin{pmatrix}n-r_1 \\ r_2\end{pmatrix}}^2r_2!\dots{\begin{pmatrix}n-r_1-\dots-r_{n-1} \\ r_n\end{pmatrix}}^2r_n! \\
&=n!\displaystyle \sum_{m=1}^n m!S(n,m)
\end{split}
\end{equation}
where $S(n,m)$ is the Stirling numbers of the second kind, i.e., $S(n,m)$  is the number of ways of partitioning a set of $n$ elements into $m$ non-empty subsets.
\item[{\rm{(b)}}]The unit group of $\mathscr{R}_n$ is $W:=N_{\PGL}(T)/T$ and $\mathscr{R}_n=WE(\mathscr{R}_n)$.
\item[{\rm{(c)}}]$E(\mathscr{R}_n)=\displaystyle\bigcup_{w\in W}w\Lambda_nw^{-1}$.
\item[{\rm{(d)}}]$\mathscr{R}_n=\displaystyle\bigsqcup_{e\in \Lambda_n}WeW$.
\end{itemize}
\end{proposition}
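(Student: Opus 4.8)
The plan is to transport everything to the explicit model furnished by Proposition \ref{rm}, where $\mathscr{R}_n\simeq S_n\Join_\varphi P_n$ is realised on the set $S_n\times P_n$ with the product (\ref{pro}), and then to verify (a)--(d) by direct computation in that model. The one structural fact I would record first is that every ordered set partition is idempotent for $*$: since the blocks of a fixed $P=(p_1,\dots,p_m)$ are pairwise disjoint, only the diagonal intersections $p_i\cap p_i=p_i$ survive in $P*P$, so $P*P=P$. Consequently $(\sigma,P)$ squares to $(\sigma^2,P*P)=(\sigma^2,P)$, which will immediately pin down the idempotents of $\mathscr{R}_n$.

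For the counting in (a), I would use the bijection (\ref{mon}), which identifies $\mathscr{R}_n$ with $S_n\times P_n$ as a set, so that $|\mathscr{R}_n|=n!\,|P_n|$. Counting ordered set partitions by their number of blocks gives $|P_n|=\sum_{m=1}^n m!\,S(n,m)$, since an ordered partition into $m$ blocks is an unordered one ($S(n,m)$ choices) together with a linear order on the blocks ($m!$ choices); this is the second line of (\ref{rnu}). For the first line I would reindex the same quantity by the block-size composition $(r_1,\dots,r_m)$ of $n$. The telescoping multinomial identity $\prod_i\binom{n-r_1-\dots-r_{i-1}}{r_i}=\tfrac{n!}{r_1!\cdots r_m!}$ shows that the general summand equals $(n!/\prod_i r_i!)^2\,\prod_i r_i!=\tfrac{(n!)^2}{\prod_i r_i!}=n!\,\binom{n}{r_1,\dots,r_m}$, and summing over all compositions of $n$ recovers $n!\,|P_n|$.

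The structural statements (b)--(d) are then computations with (\ref{pro}). The identity is $(1,([n]))$, and solving $(\sigma,P)(\tau,Q)=(1,([n]))$ forces $P$ (hence $Q$) to be the one-block partition, because a $*$-product is a single block only when both of its factors are; thus the units are exactly $\{(\sigma,([n]))\}\cong W$, and since $(\sigma,P)=(\sigma,([n]))(1,P)$ we obtain $\mathscr{R}_n=WE(\mathscr{R}_n)$, proving (b). By the squaring computation above, idempotency forces $\sigma=1$, so $E(\mathscr{R}_n)=\{(1,P)\mid P\in P_n\}$; a direct calculation gives the conjugation rule $w(1,P)w^{-1}=(1,w(P))$ with $w(P)=(w(p_1),\dots,w(p_m))$, and since every $P$ is $w(P_0)$ for a suitable $w\in S_n$ and the interval partition $P_0$ with the same block sizes, the $W$-conjugates of $\Lambda_n$ exhaust $E(\mathscr{R}_n)$, which is (c). For (d) I would compute $WeW$ for $e=(1,P_0)\in\Lambda_n$ and find $WeW=\{(\gamma,Q)\mid Q\text{ has the block sizes of }P_0\}$; as the interval partitions $\Lambda_n$ are in bijection with compositions of $n$, these double cosets are pairwise disjoint and cover $\mathscr{R}_n$, giving the stated decomposition.

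The main obstacle is the assertion in (a) that $\mathscr{R}_n$ is an inverse monoid. Regularity is not the difficulty: the explicit involution $(\sigma,P)\mapsto(\sigma^{-1},\sigma(P))$ satisfies $xx^{\ast}x=x$ and $x^{\ast}xx^{\ast}=x^{\ast}$, as one checks using $P*P=P$. The delicate point is uniqueness of the inverse, which is equivalent to commutativity of idempotents, that is, to the identity $(1,P)(1,Q)=(1,Q)(1,P)$, i.e. $P*Q=Q*P$ in $P_n$. Establishing that the ordering of blocks produced by $*$ is independent of the order of the two factors — or isolating the precise sense in which it must be understood — is where I expect the real work to lie, and it is the step I would scrutinise most carefully before trusting the inverse-monoid conclusion.
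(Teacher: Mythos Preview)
Your approach coincides with the paper's: it too works in the model $S_n\Join_\varphi P_n$, uses the same involution $(\sigma,p)^*:=(\sigma^{-1},\sigma(p))$ for regularity, identifies $E(\mathscr{R}_n)$ as $\{(1,P):P\in P_n\}$ via $\sigma^2=\sigma\Rightarrow\sigma=1$, and handles (c), (d) and both counts of $|\mathscr{R}_n|$ by the same direct computations you outline. (One small slip in your write-up: by (\ref{pro}) one has $(\sigma,P)^2=(\sigma^2,\sigma^{-1}(P)*P)$, not $(\sigma^2,P*P)$; the conclusion about idempotents is unaffected since once $\sigma=1$ the second coordinate is indeed $P*P=P$.)

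The point you single out as ``the main obstacle'' is precisely where the paper's argument stops short. Its entire proof that $\mathscr{R}_n$ is an inverse monoid consists of exhibiting $(\sigma,p)^*$ and checking $xx^*x=x$, $x^*xx^*=x^*$; uniqueness is never addressed. Your suspicion that $P*Q=Q*P$ may fail is correct: for $P=(\{1,3\},\{2\})$ and $Q=(\{1,2\},\{3\})$ one gets $P*Q=(\{1\},\{2\},\{3\})$ while $Q*P=(\{1\},\{3\},\{2\})$, so idempotents in $\mathscr{R}_n$ do \emph{not} commute. Concretely, in $\mathscr{R}_2$ the idempotent $e=(1,(\{1\},\{2\}))$ satisfies $ef=f$ and $fe=e$ with $f=(1,(\{2\},\{1\}))$, so $efe=e$ and $fef=f$; thus $e$ admits both itself and $f$ as inverses. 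The gap you identified is therefore genuine, and the paper's own proof shares it: what is actually established is that $\mathscr{R}_n$ is a regular $*$-monoid, not an inverse monoid in the sense of the paper's definition.
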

\begin{proof}
(a) For any $(\sigma,p)\in \mathscr{R}_n$, let $(\sigma,p)^{*}:=(\sigma^{-1},\sigma(p))$. Then
\begin{equation*}
(\sigma,p)=(\sigma,p)(\sigma,p)^{*}(\sigma,p),\,\,(\sigma,p)^{*}=(\sigma,p)^{*}(\sigma,p)(\sigma,p)^{*}.
\end{equation*}
Thus $\mathscr{R}_n$ is an inverse monoid. We next consider the number $|\mathscr{R}_n|$. We fix a partition $r_1+r_2+\dots+r_m=n$. We first choose $r_1$ columns and $r_1$ rows among the $n$ columns and $n$ rows, and choose a placement of  $1$'s in the place of $r_1\times r_1$ permutation matrices. Next we choose $r_2$ columns and $r_2$ rows among the $n-r_1$ columns and $n-r_1$ rows, and choose a placement of  $1$'s in the place of $r_2\times r_2$ permutation matrices. We repeat this process and sum up over partitions $r_1+r_2+\dots+r_m=n$, and then we obtain the first equality of (\ref{rnu}). 

On the other hand, let $P_{n,m}$ be the collection of ordered set partitions of $[n]$ with $m$ blocks. Then $|P_{n,m}|=m!S(n,m)$ by the definition of the Stirling numbers of the second kind. Thus we obtain the second equality of (\ref{rnu}) since $|P_n|=\displaystyle\sum_{m=1}^n|P_{n,m}|$ and $|\mathscr{R}_n|=|S_n||P_n|$. 

(b) First we see 
\begin{equation} \label{idem}
E(\mathscr{R}_n)=\left\{(\displaystyle\sum_{j\in p_1}E_{jj},\displaystyle\sum_{j\in p_2}E_{jj},\dots,\displaystyle\sum_{j\in p_n}E_{jj})\mid
 (p_1,\dots,p_n)\in P_n\right\}.
\end{equation}
In fact, if $(\sigma,p)(\sigma,p)=(\sigma,p)$ for $(\sigma,p)\in \mathscr{R}_n$, then $\sigma^2=\sigma$, i.e., $\sigma=e$. Then $\mathscr{R}_n=WE(\mathscr{R}_n)$.

(c) follows from (\ref{idem})

(d)\begin{equation*}
\begin{split}
&\displaystyle\bigsqcup_{e\in \Lambda_n}WeW \\
&=\left\{\sigma(\displaystyle\sum_{j\in p_1}E_{jj},\dots,\displaystyle\sum_{j\in p_n}E_{jj})\tau\mid
\begin{matrix} (p_1,\dots,p_n)=(k_1,\dots,k_{m-1}) \\ \sigma,\tau\in W
\end{matrix}\right\} \\
&=\left\{(\displaystyle\sum_{j\in p_1}E_{\sigma^{-1}(j)\tau(j)},\dots,\displaystyle\sum_{j\in p_n}E_{\sigma^{-1}(j)\tau(j)})\mid
\begin{matrix} (p_1,\dots,p_n)=(k_1,\dots,k_{m-1}) \\ \sigma,\tau\in W
\end{matrix}\right\} \\
&=\left\{(\displaystyle\sum_{k\in \tau(p_1)}E_{(\tau\sigma)^{-1}(k)k},\dots,\displaystyle\sum_{k\in \tau(p_n)}E_{(\sigma\tau)^{-1}(k)k})\mid
\begin{matrix} (p_1,\dots,p_n)=(k_1,\dots,k_{m-1}) \\ \sigma,\tau\in W
\end{matrix}\right\} \\
&=\mathscr{R}_n,
\end{split}
\end{equation*}
where we denote
\begin{equation} \label{par}
(k_1,\dots,k_{m-1})=(\{1,\dots,k_1\},\dots,\{k_{m-1}+1,\dots,n\}).
\end{equation}
\end{proof}
\subsection{Presentation of $\PM$-monoids}
\subsubsection{Presentation of monoids}
Let $X$ be an alphabet, i.e., a set whose elements are called letters, and denote by $X^*$ the free monoid on $X$. For $R\subseteq X^{*}\times X^{*}$, let $R^{\#}$ denote the smallest congruence on $X^*$ containing $R$. We say a monoid $M$ has a presentation $\langle X\mid R\rangle$ if $M\simeq X^*/R^{\#}$. An element $(w_1,w_2)\in R$ is called a relation and written as $w_1=w_2$. 
\subsubsection{Presentation of rook monoids}
Let $R_n$ be a set of $n\times n$ zero-one matrices which have at most one entry equal to 1 in each row and in each column. The monoid $R_n$ is called the rook monoid. The rook monoid $R_n$ has the following presentation using generating set and relations:
\begin{theorem}[\cite{Go2} Prop 1.6.]
The rook monoid has a monoid presentation with generating set $\{s_1,\dots,s_{n-1},e_0,\dots,e_{n-1}\}$ and defining relations:
\begin{equation*}
\begin{aligned}
s_i^2&=1\,\,&(1&\le i\le n-1), \\
s_is_j&=s_js_i\,\,&(1&\le i,j\le n-1 ,\, |i-j|\ge 2), \\
s_is_{i+1}s_i&=s_{i+1}s_is_{i+1}\,\,&(1&\le i\le n-1), \\
e_ie_j&=e_je_i=e_{{\rm{min}}(i,j)}\,\, &(0&\le i,j\le n-1), \\
e_js_i&=s_ie_j \,\, &(1&\le i<j\le n-1), \\
e_js_i&=s_ie_j=e_j\,\, &(0&\le j<i\le n-1), \\
e_is_ie_i&=s_ie_{i-1}\,\,&(1&\le i\le n-1).
\end{aligned}
\end{equation*}
\end{theorem}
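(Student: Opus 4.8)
The plan is to use the standard two-step method for establishing a monoid presentation $M\simeq\langle X\mid R\rangle$: first produce a surjective homomorphism $\Phi\colon\langle X\mid R\rangle\twoheadrightarrow R_n$, and then bound $|\langle X\mid R\rangle|$ from above by $|R_n|$ by reducing every word to a normal form and counting the normal forms. I observe at the outset that the first three families of relations are precisely the Coxeter presentation of $S_n$, so the subsemigroup generated by the $s_i$ is already understood as a quotient of $S_n$; the real work is to control the idempotents $e_j$ and their interaction with the $s_i$.

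First I would realize the generators concretely inside $R_n$: let $s_i$ be the permutation matrix of the transposition $(i,i+1)$ and $e_j=\sum_{k=1}^{j}E_{kk}$ the rank-$j$ diagonal idempotent, with $e_0$ the zero matrix. A direct matrix computation (equivalently, a computation with partial maps on $[n]$) then verifies each of the seven families of relations; the only one requiring care is $e_is_ie_i=s_ie_{i-1}$, where both sides reduce to the partial identity on $\{1,\dots,i-1\}$. Since $s_1,\dots,s_{n-1}$ generate the unit group $S_n$ and every rank-$r$ partial permutation can be written as $u\,e_r\,v$ with $u,v\in S_n$, these elements generate $R_n$; hence the assignment extends to a surjective monoid homomorphism $\Phi$.

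The heart of the proof is the normal form. Using $e_js_i=s_ie_j$ (for $i<j$) and $e_js_i=s_ie_j=e_j$ (for $j<i$) I can move every idempotent past an adjacent $s_i$, either commuting with it or absorbing it; using $e_ie_j=e_{\min(i,j)}$ I can collapse any block of idempotents to a single one; and using the key relation $e_is_ie_i=s_ie_{i-1}$ I can eliminate the patterns in which an $s_i$ is trapped between two copies of $e_i$, each such application lowering the rank. Iterating, I would show that every word is congruent modulo $R^{\#}$ either to a word in the $s_i$ alone or to one of the form $\alpha\,e_k\,\beta$, where $\alpha,\beta$ are words in the $s_i$ put into a normalized shape (minimal coset representatives relative to the parabolic $S_k\times S_{n-k}$ that stabilizes $e_k$, with $\alpha$ encoding the image and $\beta$ the domain together with the bijection). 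A bijective count of these canonical forms then yields $\sum_{k=0}^{n}\binom{n}{k}^{2}k!=|R_n|$.

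The main obstacle is exactly this normal-form step: proving that the rewriting terminates and is confluent, so that each word acquires a genuinely unique canonical form, and that the normalizations are chosen so that distinct canonical forms are counted without repetition, giving precisely $|R_n|$. Once the number of congruence classes is bounded above by $|R_n|$, the surjection $\Phi$ is forced to be a bijection, and the presentation follows. An alternative to an explicit confluent rewriting system would be an induction on $n$, embedding the presentation for $R_{n-1}$ and analysing the interaction of the new generators $s_{n-1}$ and $e_{n-1}$ with the old ones, but I expect the direct normal-form count to be the cleaner route.
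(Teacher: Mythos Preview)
The paper does not actually prove this theorem: it is quoted verbatim as Proposition~1.6 of \cite{Go2} and serves only as background for the analogous presentation of the $\mathbb{P}M$-monoid (Proposition~\ref{rep}). There is therefore no ``paper's own proof'' to compare against.

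That said, your proposed argument is correct and is precisely the strategy the paper itself adopts for its own analogue, Proposition~\ref{rep}: one constructs the surjection $\Phi$ onto the concrete monoid, then bounds the size of the abstract monoid from above by writing every element in the form $\alpha\,e_k\,\beta$ with $\alpha,\beta$ ranging over coset representatives for the parabolic subgroup stabilizing $e_k$, and finally counts to get $\sum_{k=0}^{n}\binom{n}{k}^{2}k!=|R_n|$. The paper's proof of Proposition~\ref{rep} does exactly this (with $S_{k_1,\dots,k_{m-1}}'$ playing the role of the parabolic and $X_{k_1,\dots,k_{m-1}}$ the coset representatives), and that proof in turn is modelled on Godelle's original argument for the rook monoid. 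One small remark: in practice neither Godelle nor the present paper carries out a full confluence analysis of a rewriting system; it suffices to show that every element lies in some double coset $S_n' e_k S_n'$ and then to bound $|S_n' e_k S_n'|$ by $|S_n'|\cdot|X_k|$, which avoids the need for a unique normal form. Your sketch already contains this idea, so the ``main obstacle'' you flag is milder than you suggest.
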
 

\subsubsection{Presentation of $\PM$-monoids}
We construct a presentation for $\mathscr{R}_n=\overline{N_{\PGL}(T)}/T$ like the rook monoid. We first define some notations. For $i=1,\dots,n-1$ and a partition $(k_1,\dots,k_{m-1})$ (cf. (\ref{par})), if there exists $j\in \{1,\dots,n\}$ such that $\{i,i+1\}\subseteq\{k_{j-1}+1,\dots,k_j\}$, then we set 
\begin{equation*}
i_{*}:=j.
\end{equation*} 
For $\sigma\in S_n$ we define a map $\varphi_{\sigma}:P_n\rightarrow P_n$ by
\begin{equation*}
(p_1,\dots,p_m)\mapsto (\sigma^{-1}(p_1),\dots,\sigma^{-1}(p_m)).
\end{equation*}
We define a set 
\begin{equation*}
\Pi_n=\{(k_1,\dots,k_{m-1}):1\le k_1<\dots<k_{m-1}< n\}
\end{equation*}
, where $(k_1,\dots,k_{m-1})$ is $(\ref{par})$.
For $p\in P_n$, take an element $w\in S_n$ such that $wpw^{-1}\in \Pi_n$, and set
\begin{equation*}
u^{w}(p):=wpw^{-1}\in \Pi_n.
\end{equation*}
We also set
\begin{equation*}
{\rm{Ad}}(\sigma)(e):=\sigma^{-1}e\sigma.
\end{equation*}
Using these notations we obtain the following monoid presentation of the $\PM$-monoid $\mathscr{R}_n$.
\begin{proposition} \label{rep}
The $\PM$-monoid $\mathscr{R}_n$ has a monoid presentation with generating set
\begin{equation*}
\{s_1,\dots,s_{n-1},e_{k_1,\dots,k_{m-1}}\,\, (1\le k_1<\dots<k_{m-1}< n)\}
\end{equation*}
and defining relations
\begin{align}
s_i^2&=1  &\,\,(&1\le i\le n-1),  \label{re1} \\
s_is_j&=s_js_i  &(&1\le i,j \le n-1,\,|i-j|\ge 2), \label{re2}   \\
s_is_{i+1}s_i&=s_{i+1}s_is_{i+1}  &(&1\le i\le n-1), \label{re3} 
\end{align}
\begin{align}
e_{k_1,\dots,k_{i_{*}},\dots,k_{m-1}}s_i&=s_ie_{k_1,\dots,k_{i_{*}},\dots,k_{m-1}} \label{re4}  \\ 
&\begin{pmatrix}
1\le i\le n-1 \\
1\le k_1<\dots<k_{i_{*}}<\dots<k_{m-1}< n
\end{pmatrix}, \notag \\
e_{k_1,\dots,k_{m-1}}s_{i_1}\dots s_{i_r}e_{l_1,\dots,l_{m^{\prime}-1}}
&={\rm{Ad}}(s_{j_1}\dots s_{j_t})(e_{q})s_{i_1}\dots s_{i_r}  \label{re5} \\
&\begin{pmatrix}
1\le k_1<\dots<k_{m-1}< n \\
1\le l_1<\dots<l_{m^{\prime}-1}< n \\
\{i_1,i_1+1\}\nsubseteq \{k_{l-1}+1,\dots,k_l\},  ^{\forall}l=1,\dots,n \\
q=u^{s_{j_1}\dots s_{j_t}}((k_1,\dots,k_{m-1})*\varphi_{(s_{i_1}\dots s_{i_r})^{-1}}((l_1,\dots,l_{m^{\prime}-1}))) \notag
\end{pmatrix}.
\end{align}
\end{proposition}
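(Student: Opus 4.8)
Let $\Gamma=X^*/R^{\#}$ denote the monoid defined by the generators and relations in the statement. The plan is to produce a surjective monoid homomorphism $\bar\pi\colon\Gamma\to\mathscr{R}_n\simeq S_n\Join_\varphi P_n$ and then to bound $|\Gamma|\le|\mathscr{R}_n|$ by means of an explicit normal form; surjectivity together with this cardinality bound forces $\bar\pi$ to be an isomorphism.

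First I would define $\pi$ on generators by sending $s_i$ to the simple transposition $(\sigma_i,1_{P_n})$ and $e_{k_1,\dots,k_{m-1}}$ to the standard idempotent $(\mathrm{id},(k_1,\dots,k_{m-1}))\in\Lambda_n$, and verify that each relation (\ref{re1})--(\ref{re5}) holds in $\mathscr{R}_n$ under the product (\ref{pro}). The Coxeter relations (\ref{re1})--(\ref{re3}) hold because the $s_i$ generate the unit group $W\cong S_n$. Relation (\ref{re4}) is the observation that $(\sigma_i,1_{P_n})$ commutes with $(\mathrm{id},q)$ precisely when $i$ and $i+1$ lie in a common block of $q$, which is exactly the condition encoded by the index $i_*$. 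The genuine computation is (\ref{re5}): one evaluates both sides in $S_n\Join_\varphi P_n$ via (\ref{pro}) and checks that they yield the same pair $(\sigma,p)$, the definitions of $\varphi_\sigma$, $u^w$, ${\rm{Ad}}$ and of the partition $q$ being exactly what is needed to make the permutation parts and the ordered-partition parts agree. This gives a well-defined homomorphism $\bar\pi\colon\Gamma\to\mathscr{R}_n$. Surjectivity is immediate from the decomposition $\mathscr{R}_n=\bigsqcup_{e\in\Lambda_n}WeW$ established above: every element is a product $w_1 e w_2$ with $w_1,w_2\in W$ generated by the $s_i$ and $e$ one of the $e_{k_1,\dots,k_{m-1}}$.

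The core of the argument is the bound $|\Gamma|\le|\mathscr{R}_n|$, obtained via a normal form. I would first use (\ref{re5}) to reduce the number of $e$-letters: a general word is an alternating product $a_0\,e^{(1)}a_1\,e^{(2)}a_2\cdots$ of (possibly empty) words $a_j$ in the $s_i$ and idempotent generators $e^{(j)}$; given two consecutive idempotents, I would first use (\ref{re4}) to strip from $a_1$ the leading transpositions that stabilise the first block structure, so that the hypothesis $\{i_1,i_1+1\}\nsubseteq\{k_{l-1}+1,\dots,k_l\}$ of (\ref{re5}) becomes available, and then apply (\ref{re5}) to rewrite $e^{(1)}a_1 e^{(2)}$ as ${\rm{Ad}}(\cdots)(e_q)\,a_1'$, which contains a single idempotent. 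Iterating drives every word to a form $w_1\,e_{k_1,\dots,k_{m-1}}\,w_2$ with $w_1,w_2$ words in the $s_i$. I would then normalise this expression: writing $W_q\cong S_{r_1}\times\cdots\times S_{r_m}$ for the stabiliser (Young) subgroup of the block structure $q=(k_1,\dots,k_{m-1})$, relation (\ref{re4}) lets me move the $W_q$-part of $w_2$ lying next to $e_q$ across the idempotent and absorb it into $w_1$, so that I may take $w_2$ to be a reduced minimal-length representative of its left coset in $W_q\backslash S_n$ and $w_1$ a reduced word in $S_n$. Modulo the Coxeter relations (\ref{re1})--(\ref{re3}) there are then $|S_n|=n!$ choices for $w_1$ and $\sum_{q}|W_q\backslash S_n|=|P_n|$ choices for the pair $(q,w_2)$, so the number of normal forms is at most $n!\,|P_n|=n!\sum_{m=1}^n m!\,S(n,m)=|\mathscr{R}_n|$. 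Since every word is congruent to a normal form, $|\Gamma|\le|\mathscr{R}_n|$, and combined with the surjectivity of $\bar\pi$ this yields $\Gamma\simeq\mathscr{R}_n$.

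The main obstacle I anticipate is the bookkeeping in this last step: proving that the rewriting procedure terminates and that every word really lands among the claimed normal forms, and that the normalisation by (\ref{re4}) removes exactly the redundancy coming from the Young subgroup $W_q$. Tracking how the permutation part and the ordered-partition part interact under (\ref{pro}) — in particular that conjugating $e_q$ by a permutation word in (\ref{re5}) is compatible with the choice of minimal coset representatives and with the index $i_*$ in (\ref{re4}) — is where the careful computation lies; the verification of (\ref{re5}) in the first step is the other genuinely computational point.
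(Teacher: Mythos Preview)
Your proposal is correct and follows essentially the same route as the paper: define the surjection by checking the relations in $\mathscr{R}_n$, then bound $|\Gamma|\le|\mathscr{R}_n|$ by using (\ref{re4}) and (\ref{re5}) to rewrite every word into the form $w_1\,e_{k_1,\dots,k_{m-1}}\,w_2$, and finally use the coset decomposition $S_n=W_q\cdot X_q$ together with (\ref{re4}) to count these forms. The paper phrases the rewriting step more tersely as closure of $\bigcup S_n' e' S_n'$ under left multiplication by generators, but the substance of the two arguments is identical.
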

\begin{proof}
Let 
\begin{equation*}
e_{k_1,\dots,k_{m-1}}:=\left(\displaystyle\sum_{j=1}^{k_1}E_{jj},\displaystyle\sum_{j=k_1+1}^{k_2}E_{jj},\dots,\displaystyle\sum_{j=k_{m-1}+1}^{n}E_{jj}\right)
\end{equation*}
and $s_i=(i,i+1)$. These elements satisfy the above relations. Let $\mathscr{R}_n^{\prime}$ be the monoid generated by elements $s_1^{\prime},\dots,s_{n-1}^{\prime}, e_{k_1,\dots,k_{m-1}}^{\prime}$ subject to the defining relations (\ref{re1})-(\ref{re5}). Since $\mathscr{R}_n$ satisfies (\ref{re1})-(\ref{re5}), there is a surjective monoid homomorphism $\theta:\mathscr{R}_n^{\prime}\rightarrow \mathscr{R}_n$ such that $\theta(s_i^{\prime})=s_i$ and $\theta(e_{k_1,\dots,k_{m-1}}^{\prime})=e_{k_1,\dots,k_{m-1}}$. Let $S_n^{\prime}=\langle s_1^{\prime},\dots,s_{n-1}^{\prime} \rangle\subseteq \mathscr{R}_n^{\prime}$. To show that $\theta:\mathscr{R}_n^{\prime}\rightarrow \mathscr{R}_n$ is an isomorphism of monoids, it suffices to show that $|\mathscr{R}_n^{\prime}|\le|\mathscr{R}_n|$, where $|\mathscr{R}_n|$ is given by (\ref{rnu}). \\
We consider the following set 
\begin{equation} \label{ses}
\displaystyle\bigcup_{1\le k_1<\dots< k_m< n}S_n^{\prime}e_{k_1,\dots,k_{m-1}}^{\prime}S_n^{\prime}.
\end{equation}
Using relations (\ref{re4}), (\ref{re5}), we can show that the set (\ref{ses}) is stable under the left multiplication by $e_{k_1,\dots,k_{m-1}}^{\prime}$ and $S_n^{\prime}$. The set (\ref{ses}) contains $e_{n}=1$. Thus the set (\ref{ses}) contains $\mathscr{R}_n$. Therefore we have
\begin{equation*}
\mathscr{R}_n^{\prime}=\displaystyle\bigcup_{1\le k_1<\dots< k_m< n}S_n^{\prime}e_{k_1,\dots,k_{m-1}}^{\prime}S_n^{\prime}.
\end{equation*}
We fix $1\le k_1<\dots<k_{m-1}< n$ and let 
\begin{equation*}
\begin{split}
S_{k_1,\dots,k_{m-1}}^{\prime}&:=\langle s_1^{\prime},\dots,s_{k_1-1}^{\prime},s_{k_1+1}^{\prime},\dots,s_{k_2-1}^{\prime},s_{k_2+1}^{\prime},\dots,s_{k_{m-1}-1}^{\prime},s_{k_{m-1}+1}^{\prime},\dots,s_{n-1}^{\prime} \rangle \\
&\simeq S_{k_1}\times S_{k_2-k_1}\times \dots \times S_{n-k_{m-1}}.
\end{split}
\end{equation*}
Write $S_n^{\prime}=S_{k_1,\dots,k_{m-1}}^{\prime}X_{k_1,\dots,k_{m-1}}$, where $X_{k_1,\dots,k_{m-1}}$ is a set of coset representatives. Then by the relation (\ref{re4}) of the above relations,
\begin{equation*}
\begin{split}
e_{k_1,\dots,k_{m-1}}^{\prime}S_n^{\prime} 
&=e_{k_1,\dots,k_{m-1}}^{\prime}S_{k_1,\dots,k_{m-1}}^{\prime}X_{k_1,\dots,k_{m-1}}\\
&=S_{k_1,\dots,k_{m-1}}^{\prime}e_{k_1,\dots,k_{m-1}}^{\prime}X_{k_1,\dots,k_{m-1}}\\
&\subseteq S_n^{\prime}e_{k_1,\dots,k_{m-1}}^{\prime}X_{k_1,\dots,k_{m-1}}.
\end{split}
\end{equation*}
Thus
\begin{equation*}
\begin{split}
|S_n^{\prime}e_{k_1,\dots,k_{m-1}}^{\prime}S_n^{\prime}| 
&\le |S_n^{\prime}e_{k_1,\dots,k_{m-1}}^{\prime}X_{k_1,\dots,k_{m-1}}| \\
&\le |S_n^{\prime}e_{k_1,\dots,k_{m-1}}^{\prime}||X_{k_1,\dots,k_{m-1}}| \\
&=\frac{n!}{k_1!(k_2-k_1)!\dots(n-k_{m-1})!}|S_n^{\prime}e_{k_1,\dots,k_{m-1}}|\\
&\le\frac{(n!)^2}{k_1!(k_2-k_1)!\dots(n-k_{m-1})!}.
\end{split}
\end{equation*}
Therefore
\begin{equation*}
\begin{split}
|\displaystyle\bigcup_{1\le k_1<\dots< k_m< n}S_n^{\prime}e_{k_1,\dots,k_{m-1}}^{\prime}S_n^{\prime}| 
&\le\displaystyle\sum_{1\le k_1<\dots< k_m< n}\frac{(n!)^2}{k_1!(k_2-k_1)!\dots(n-k_{m-1})!}\\
&=\displaystyle\sum_{r_1+\dots+r_m=n}\frac{(n!)^2}{r_1!r_2!\dots r_m!}.
\end{split}
\end{equation*}
On the other hand 
\begin{equation*}
\begin{split}
&\displaystyle\sum_{r_1+\dots+r_m=n}{\begin{pmatrix}n \\ r_1\end{pmatrix}}^2r_1!{\begin{pmatrix}n-r_1 \\ r_2\end{pmatrix}}^2r_2!\dots{\begin{pmatrix}n-r_1-\dots-r_{n-1} \\ r_n\end{pmatrix}}^2r_n! \\
&=\displaystyle\sum_{r_1+\dots+r_m=n}\frac{(n!)^2}{r_1!r_2!\dots r_m!}.
\end{split}
\end{equation*}
\end{proof}

\begin{remark}
In the relation (\ref{re5}), the adjustment by ${\rm{Ad}}$ is needed since the right hand of (\ref{re5}) is not necessarily of the form $e_{k_1,\dots,k_{m-1}}s_{i_1}\dots s_{i_r}$. For example, in $\mathscr{R}_3$
\begin{equation*}
e_2(s_2s_1s_2)e_1=s_1s_2e_1s_2s_1(s_2s_1s_2).
\end{equation*}
\end{remark}

\subsection{Braid groups and inverse braid monoids}
We review the Artin braid group $B_n$ \cite{A1}.
\begin{definition}
The braid group $B_n$ is the group generated by $n-1$ elements $\sigma_1,\sigma_2,\dots,\sigma_{n-1}$ with the braid relations 
\begin{align*}
\sigma_i\sigma_j&=\sigma_j\sigma_i \,\, &(&i=1,2,\dots,n-1, |i-j|\ge 2), \\
\sigma_i\sigma_{i+1}\sigma_i&=\sigma_{i+1}\sigma_i\sigma_{i+1} \,\, &(&i=1,2,\dots,n-2).
\end{align*}
\end{definition}
An element of the braid group can be represented by the braid diagram. 

The inverse braid monoid $IB_n$ was constructed by D. Easdown and T. G. Lavers \cite{EL}. It arises from an operation of braids : deleting one or several strings. An element of $IB_n$ is called a partial braid, and a product of two partial braids is defined (Section $1$ of \cite{EL}). Thus $IB_n$ is the monoid with product of partial braids.
\subsection{Braid $\PM$-monoids}
We define a braid monoid according to Proposition \ref{rep}. The notations are the same as those in Proposition \ref{rep}, and we add the following notation. We denote by $b|_I$ an element of  braid group of  $\#I$-strings for $b\in B_n$ and $I\subset\{1,\dots,n\}$.  If $s_{i_1},\dots,s_{i_r}\in B_n$ satisfy $s_{i_1}\dots s_{i_r}|_I=id|_I$, where $I\subset\{1,\dots,n\}$ and $id$ is the identity braid in $B_n$, then we abbreviate this condition as $\{i_1,\dots,i_r\}|_I=id$.
\begin{definition} \label{Bpm}
The braid $\mathbb{P}M$-monoid is a monoid which is defined by the monoid presentation with generating set 
\begin{equation*}
\{s_1^{\pm 1},\dots,s_{n-1}^{\pm 1},e_{k_1,\dots,k_{m-1}}\,\,(1\le k_1<\dots<k_{m-1}< n)\}
\end{equation*}
and defining relations
\begin{align} 
s_is_i^{-1}&=s_i^{-1}s_i=1 \,\, &(&1\le i\le n-1), \label{re1-} \\
s_is_j&=s_js_i ,\,\ &(&1\le i,j \le n-1,\,|i-j|\ge 2), \label{re2-} \\
s_is_{i+1}s_i&=s_{i+1}s_is_{i+1} \,\, &(&1\le i\le n-1), \label{re3-}
\end{align}
\begin{align}
s_{i_1}^{\pm 1}\dots s_{i_r}^{\pm 1}e_{k_1,\dots,k_{m-1}}s_{j_1}^{\pm 1}\dots s_{j_t}^{\pm 1}&=e_{k_1,\dots,k_{m-1}}   \label{re4-} \\
&\begin{pmatrix}
\{i_1,\dots,i_r,j_1,\dots,j_t\}|_{\{k_{j-1}-1,\dots,k_j\}}=id \\ ^{\forall}j=1,\dots ,m 
\end{pmatrix}, \notag \\
e_{k_1,\dots,k_{m-1}}s_{i_1}^{\pm 1}\dots s_{i_r}^{\pm 1}e_{l_1,\dots,l_{m^{\prime}-1}} 
&={\rm{Ad}}(s_{j_1}^{\pm 1}\dots s_{j_t}^{\pm 1})(e_{q})s_{i_1}^{\pm 1}\dots s_{i_r}^{\pm 1} \label{re5-} \\
&\begin{pmatrix} 
1 \le k_1<\dots<k_{m-1}< n \\
1\le l_1<\dots<l_{m^{\prime}-1}< n  \\
\{i_1,i_1+1\}\nsubseteq \{k_{l-1}+1,\dots,k_l\}, ^{\forall} l=1,\dots,m \\
q=u^{s_{j_1}^{\pm 1}\dots s_{j_t}^{\pm 1}}((k_1,\dots,k_{m-1})*\varphi_{(s_{i_1}\dots s_{i_r})^{-1}}((l_1,\dots,l_{m^{\prime}-1})))
\end{pmatrix}. \notag
\end{align}
\end{definition}

\section{Main results}
\subsection{Braid diagram of the braid $\PM$-monoids}
We denote by $\mathscr{M}$ the monoid defined in Definition \ref{Bpm}. To describe the monoid $\mathscr{M}$ geometrically we shall define a $\mathbb{P}M$-braid.  

First, we shall define an arc.
\begin{definition}
An arc is the image of an embedding from the unit interval $[0,1]$ into $\mathbb{R}^3$. 
\end{definition}

Take the usual coordinate system $(x,y,z)$ for $\mathbb{R}^3$. Choose $z_0^{(m)}<z_1^{(m)}<\dots<z_0^{(2)}<z_1^{(2)}<z_0^{(1)}<z_1^{(1)}$. Mark $n\geq 0$ distinct points $P_1^i,\dots,P_n^i$ on a line in the plane $z=z_1^{(i)}$, and project this orthogonally on the plane $z=z_0^{(i)}$,  yielding points $Q_1^i,\dots,Q_n^i$ for each $i=1,\dots,m$. 

A $\mathbb{P}M$-braid on $n$ strings is a system
\begin{equation*}
\beta=\{\beta_1,\dots,\beta_{k_1},\beta_{k_1+1},\dots,\beta_{k_2},\beta_{k_2+1},\dots,\beta_{k_{m-1}+1},\dots,\beta_n\}
\end{equation*}
of $n$ arcs for some $1\le k_1<k_2<\dots<k_{m-1}<n$ such that 
\begin{enumerate}
\item[(1)] There is a partial one-one mapping of rank $k_1$ 
\begin{equation*}
\Phi_1^{\beta}:\{1,\dots,n\}\rightarrow \{1,\dots,n\}
\end{equation*}
with domain $\{i_1,\dots,i_{k_1}\}$ such that $\beta_j$ connects $P_{i_j}^1$ to $Q_{\Phi_1^{\beta}(i_j)}^1$ for $j=1,\dots,k_1$.\\
There is a partial one-one mapping of rank $k_2-k_1$ 
\begin{equation*}
\Phi_2^{\beta}:\{1,\dots,n\}\backslash\{i_1,\dots,i_{k_1}\}\rightarrow \{1,\dots,n\}\backslash\{i_1,\dots,i_{k_1}\}
\end{equation*}
with domain $\{i_{k_1+1},\dots,i_{k_2}\}$ such that $\beta_j$ connects $P_{i_j}^2$ to $Q_{\Phi_1^{\beta}(i_j)}^2$ for $j=k_1+1,\dots,k_2$.
\begin{center}
$\dots\dots$
\end{center}
There is a partial one-one mapping of rank $n-k_{m-1}$
\begin{equation*}
\Phi_m^{\beta}:\{1,\dots,n\}\backslash\{i_1,\dots,i_{k_{m-1}}\}\rightarrow \{1,\dots,n\}\backslash\{i_1,\dots,i_{k_{m-1}}\}
\end{equation*}
with domain $\{i_{k_{m-1}+1},\dots,i_n\}$ such that $\beta_j$ connects $P_{i_j}^m$ to $Q_{\Phi_m^{\beta}(i_j)}^m$ for $j=k_{m-1}+1,\dots,n$. \\
\item[(2)] For $j=1,\dots,m$, the arc $\beta_l$ intersects the plane $z=z_0^{(j)}$ exactly once, and $\beta_l$ intersects the plane $z=z_1^{(j)}$ exactly once, for $l=k_{j-1}+1,\dots,k_j$, and $\beta_s$ does not intersect $z=z_0^{(t)}$, $z=z_1^{(t)}$ for $s\neq t$. \\
\item[(3)] For $j=1,\dots,m$ the union $\beta_{k_{j-1}+1}\cup\dots\cup\beta_{k_j}$ of the arcs intersects each parallel plane $z=z_0^{(j)},z=z_1^{(j)}$ at exactly $k_j-k_{j-1}$ distinct points. 
\end{enumerate}
\begin{example}
The following is a $\PM$-braid. \\
\raisebox{50pt}{$\beta=$}
\includegraphics[width=4cm,bb=0 0 320 300]{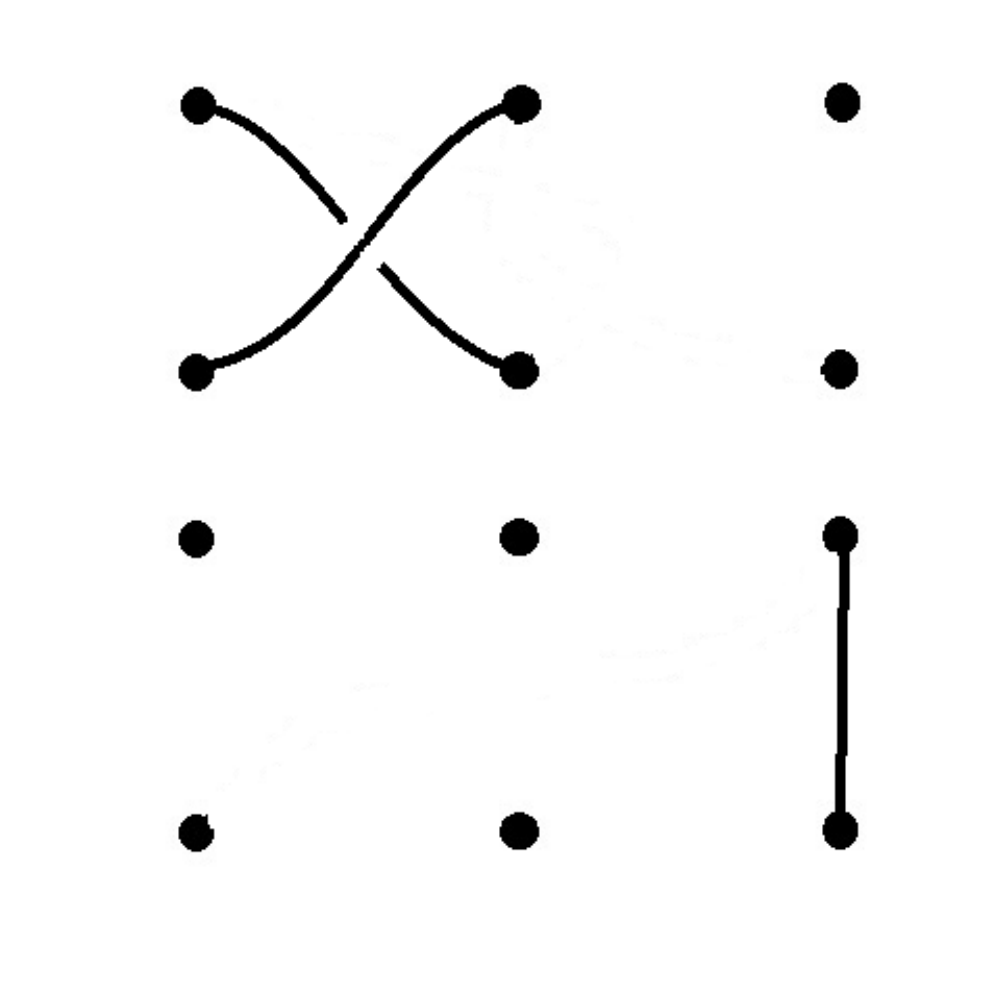} 
\end{example}

Two $\mathbb{P}M$-braids
\begin{equation*}
\begin{split}
 &\beta=\{\beta_1,\dots,\beta_{k_1},\beta_{k_1+1},\dots,\beta_{k_2},\beta_{k_2+1},\dots,\beta_{k_{m-1}+1},\dots,\beta_n\}, \\
 &\gamma=\{\gamma_1,\dots,\gamma_{k_1},\gamma_{k_1+1},\dots,\gamma_{k_2},\gamma_{k_2+1},\dots,\gamma_{k_{m^{\prime}-1}+1},\dots,\gamma_n\}
 \end{split}
 \end{equation*}
are defined to be equivalent if \\
\begin{itemize}
\item[(1)] $m=m^{\prime}$ and $\Phi_i^{\beta}=\Phi_i^{\gamma}$ for $i=1,\dots,m$, \\
\item[(2)] $\beta$ and $\gamma$ are homotopy equivalent, i.e., there exist continuous maps 
\begin{equation*}
F_j:[0,1]\times [0,1]\rightarrow \mathbb{R}^3,\,\,\,\,\,\,\,\,\,\,\, (j=1,\dots,m)
\end{equation*}
such that for all $s, t\in [0,1]$,
\begin{align*}
&\begin{matrix}
F_j(t,0)&=&\beta_j(t)  \\
F_j(t,1)&=&\gamma_j(t)
\end{matrix}
&(&j=1,\dots ,m), \\
&\begin{matrix}
F_j(0,s)&=&P_{i_j}^1  \\
F_j(1,s)&=&Q_{\Phi_1^{\beta}(i_j)}^1
\end{matrix}
&(&j=1,\dots ,k_1), \\
&\begin{matrix}
F_j(0,s)&=&P_{i_j}^2 \\
F_j(1,s)&=&Q_{\Phi_2^{\beta}(i_j)}^2
\end{matrix}
&(&j=k_1+1,\dots,k_2),  \\
&&\dots \\
&\begin{matrix}
F_j(0,s)&=&P_{i_j}^m  \\
F_j(1,s)&=&P_{\Phi_m^{\beta}(i_j)}^m
\end{matrix}
&(&j=k_{m-1}+1,\dots,n), 
\end{align*}
and, for each $s\in[0,1]$ if we define
\begin{equation*}
\beta^s=\{\beta_1^s,\dots,\beta_m^s\},
\end{equation*}
where
\begin{equation*}
\beta_j^s(t)=F_j(s,t) {\text{ for }}j=1,\dots,n,
\end{equation*}
then $\beta^s$ is itself a $\mathbb{P}M$-braid. 
\end{itemize}
\begin{example}
The following $\PM$-braids are equivalent. \\
\raisebox{40pt}{$\beta=$}
\includegraphics[width=4cm,bb=0 0 320 360]{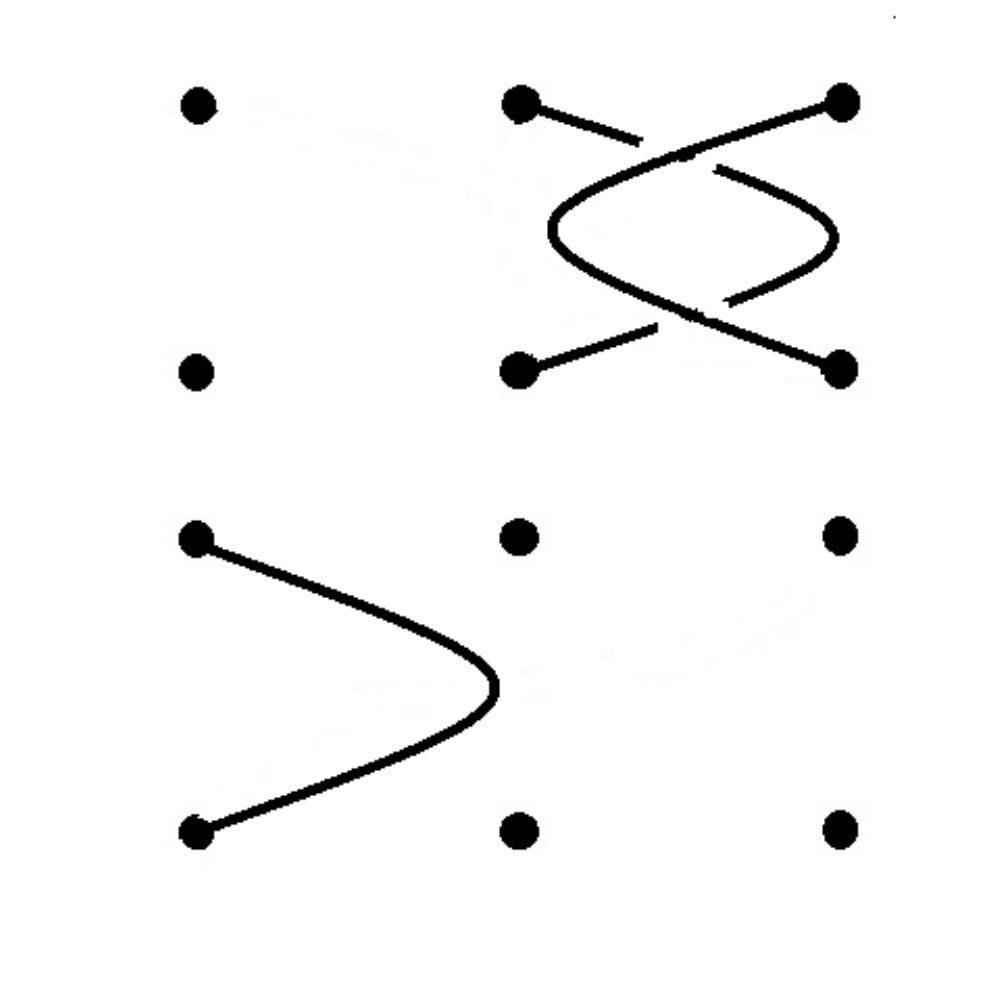} 
\raisebox{40pt}{$\gamma=$}
\includegraphics[width=4cm,bb=0 0 320 360]{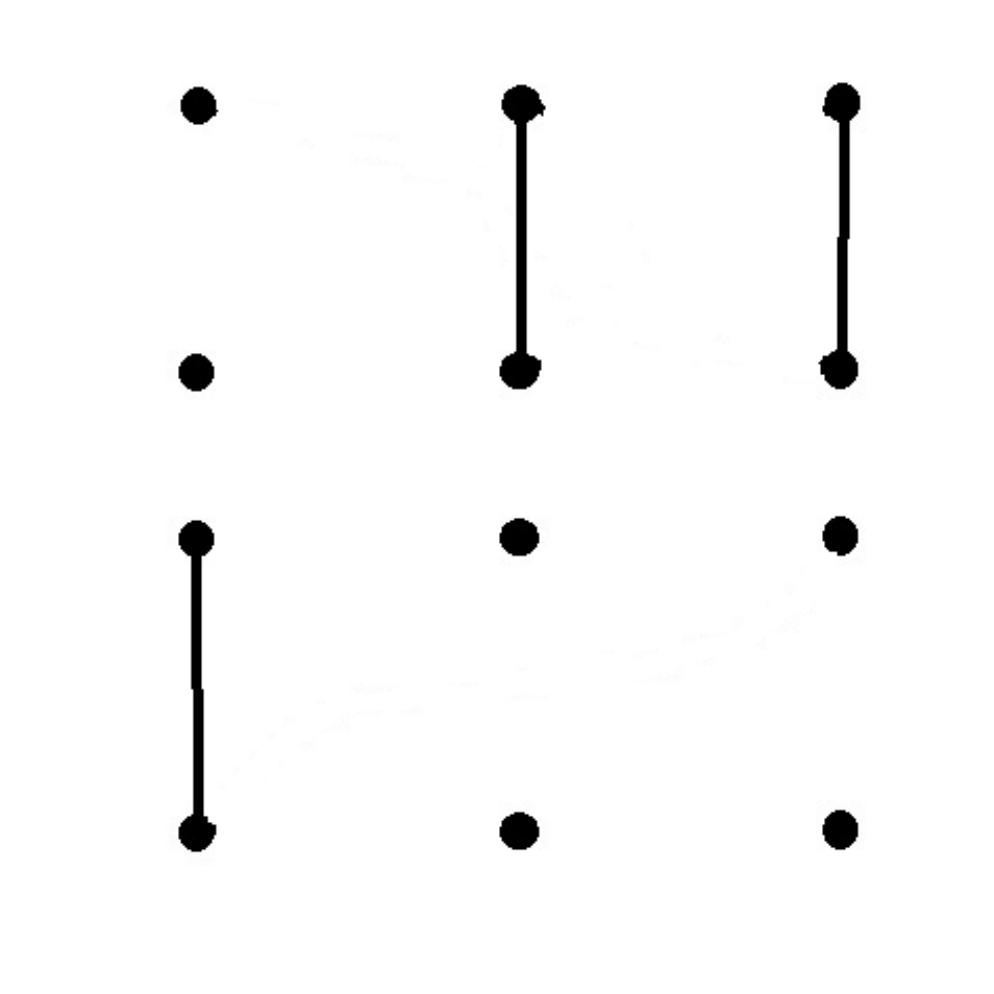} \\
\end{example}

Define the product $\beta\gamma$ of two braids
\begin{equation*}
\begin{split}
 &\beta=\{\beta_1,\dots,\beta_{k_1},\beta_{k_1+1},\dots,\beta_{k_2},\beta_{k_2+1},\dots,\beta_{k_{m-1}+1},\dots,\beta_n\}, \\
 &\gamma=\{\gamma_1,\dots,\gamma_{k_1},\gamma_{k_1+1},\dots,\gamma_{k_2},\gamma_{k_2+1},\dots,\gamma_{k_{m^{\prime}-1}+1},\dots,\gamma_n\}
 \end{split}
\end{equation*}
as follows. 

We first define an operation ($k_il_j$). Take $z_1^{(11)}>z_0^{(11)}>z_1^{(21)}>z_0^{(21)}>\dots>z_1^{(m1)}>z_0^{(m1)}>z_1^{(12)}>z_0^{(12)}>\dots>z_1^{(m2)}>z_0^{(m2)}>
\dots>z_1^{(mm^{\prime})}>z_0^{(mm^{\prime})}$.\\

($k_il_j$) : 
\begin{itemize}
\item[(1)] Translate $\{\gamma_{l_{j-1}+1},\dots,\gamma_{l_j}\}$ parallel to itself so that the upper plane of  $\{\gamma_{l_{j-1}+1},\dots,\gamma_{l_j}\}$ coincides with the lower plane of $\{\beta_{k_{i-1}+1},\dots,\beta_{k_j}\}$; 
\item[(2)] Translate the above system of arcs so that the upper plane of  $\{\beta_{k_{i-1}+1},\dots,\beta_{k_j}\}$ coincides with $z=z_1^{(ij)}$. Keeping   $z=z_1^{(ij)}$ fixed, contract the resulting systems of arcs so that the translated lower plane of $\{\gamma_{l_{j-1}+1},\dots,\gamma_{l_j}\}$ lies into the position of $z=z_0^{(ij)}$; 
\item[(3)] Remove any arc that do not now join the upper plane to the lower plane. 
\end{itemize}

Then take the operations ($k_1l_1$),$\dots$,($k_ml_1$),($k_1l_2$),$\dots$,($k_ml_2$),($k_1l_{m^{\prime}}$),$\dots$,($k_ml_{m^{\prime}}$), finally remove empty system of arcs. The resulting $\mathbb{P}M$-braid is denoted by $\beta\gamma$.

\begin{example}
Let $\beta$ and $\gamma$ be the following $\mathbb{P}M$-braids: \\
\raisebox{40pt}{$\beta=\begin{matrix}\beta_1 \\ \beta_2 \end{matrix}=$}
\includegraphics[width=4cm,bb=0 0 320 250]{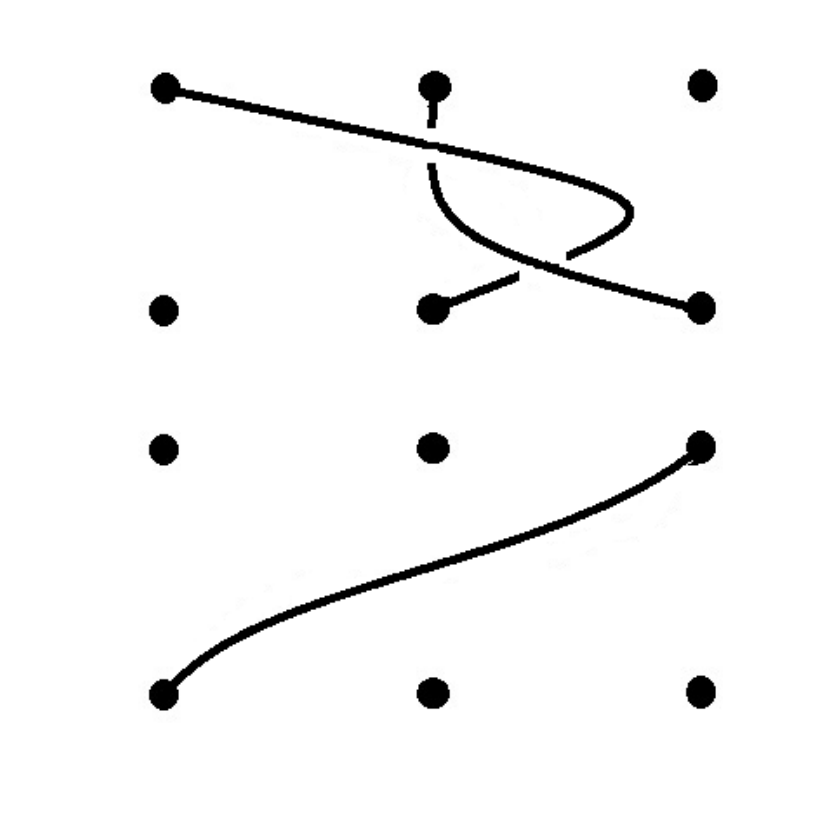} 
\raisebox{40pt}{$\gamma=\begin{matrix}\gamma_1 \\ \gamma_2 \end{matrix}=$}
\includegraphics[width=4cm,bb=0 0 320 250]{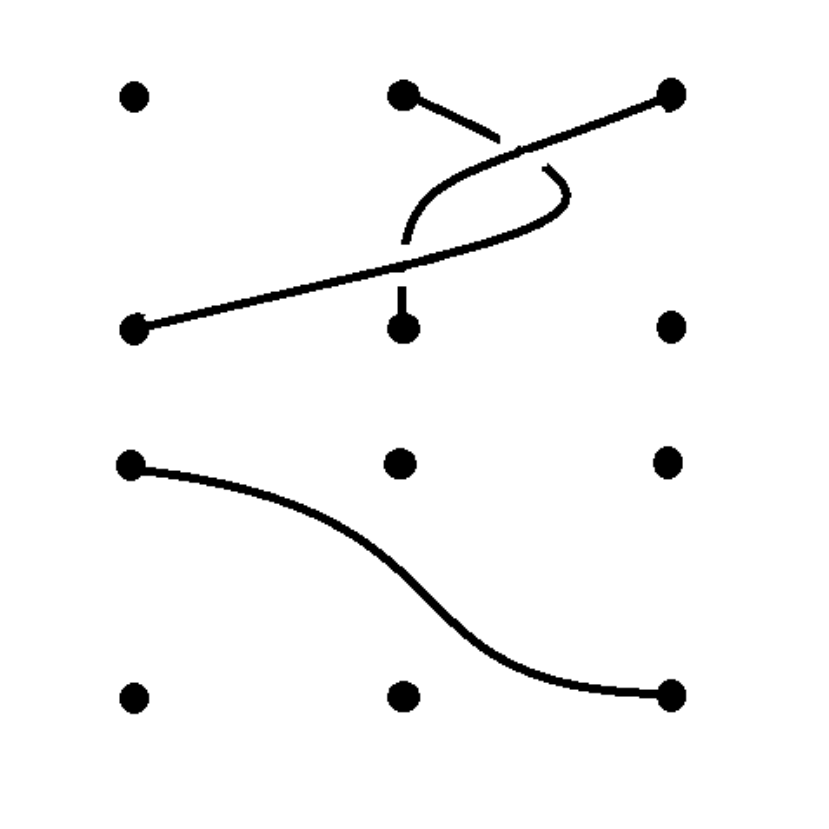} \\
then $\beta^2,\beta\gamma,\gamma\beta$ are obtained as follows: \\
\raisebox{140pt}{$\beta^2=\begin{matrix} \beta_1\beta_1 \\ \beta_2\beta_1 \\ \beta_1\beta_2 \\ \beta_2\beta_2 \end{matrix}=$}
\includegraphics[width=4cm,bb=0 0 320 450]{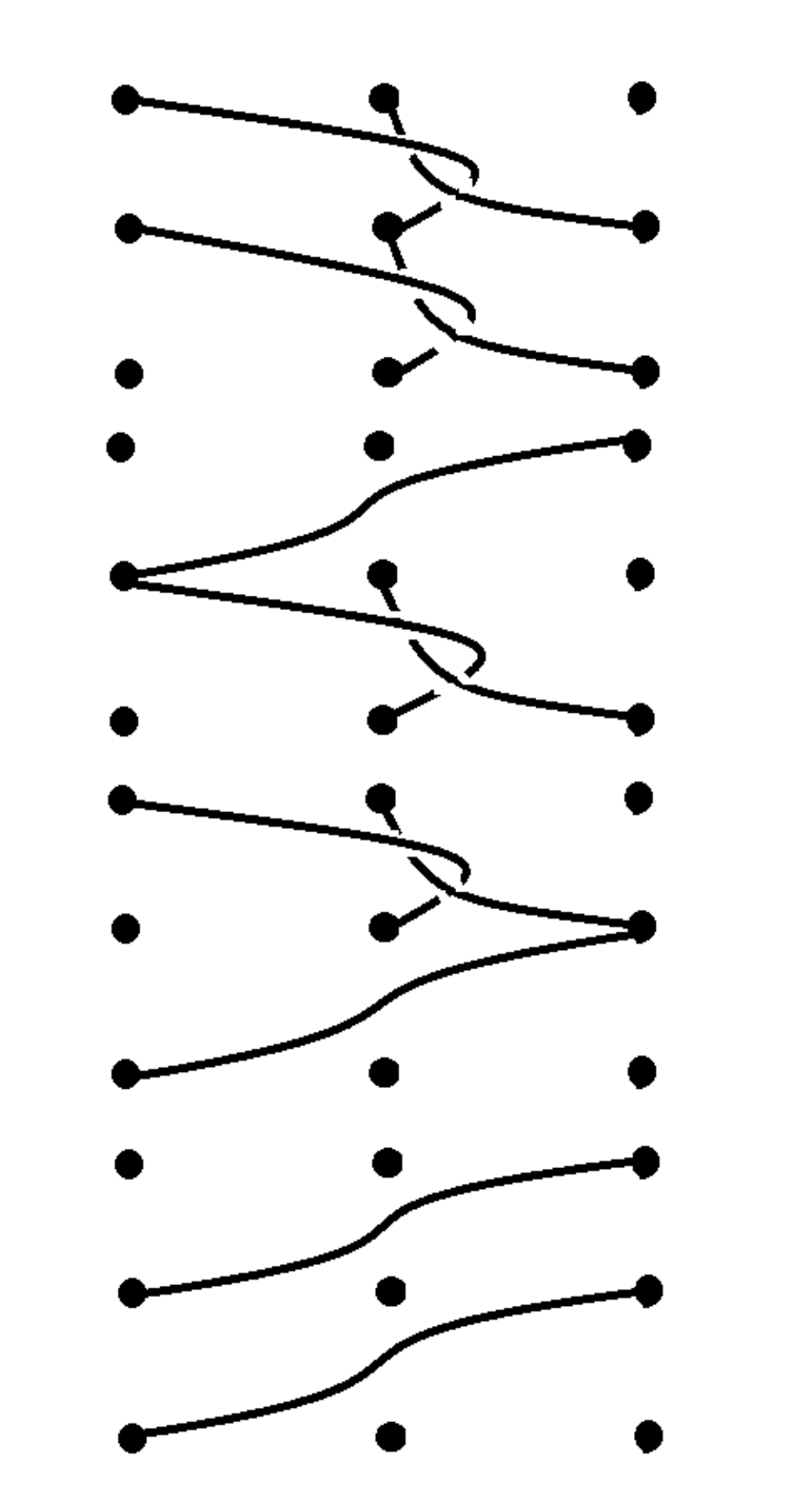}
\raisebox{140pt}{$=$}
\raisebox{70pt}{\includegraphics[width=4cm,bb=0 0 320 450]{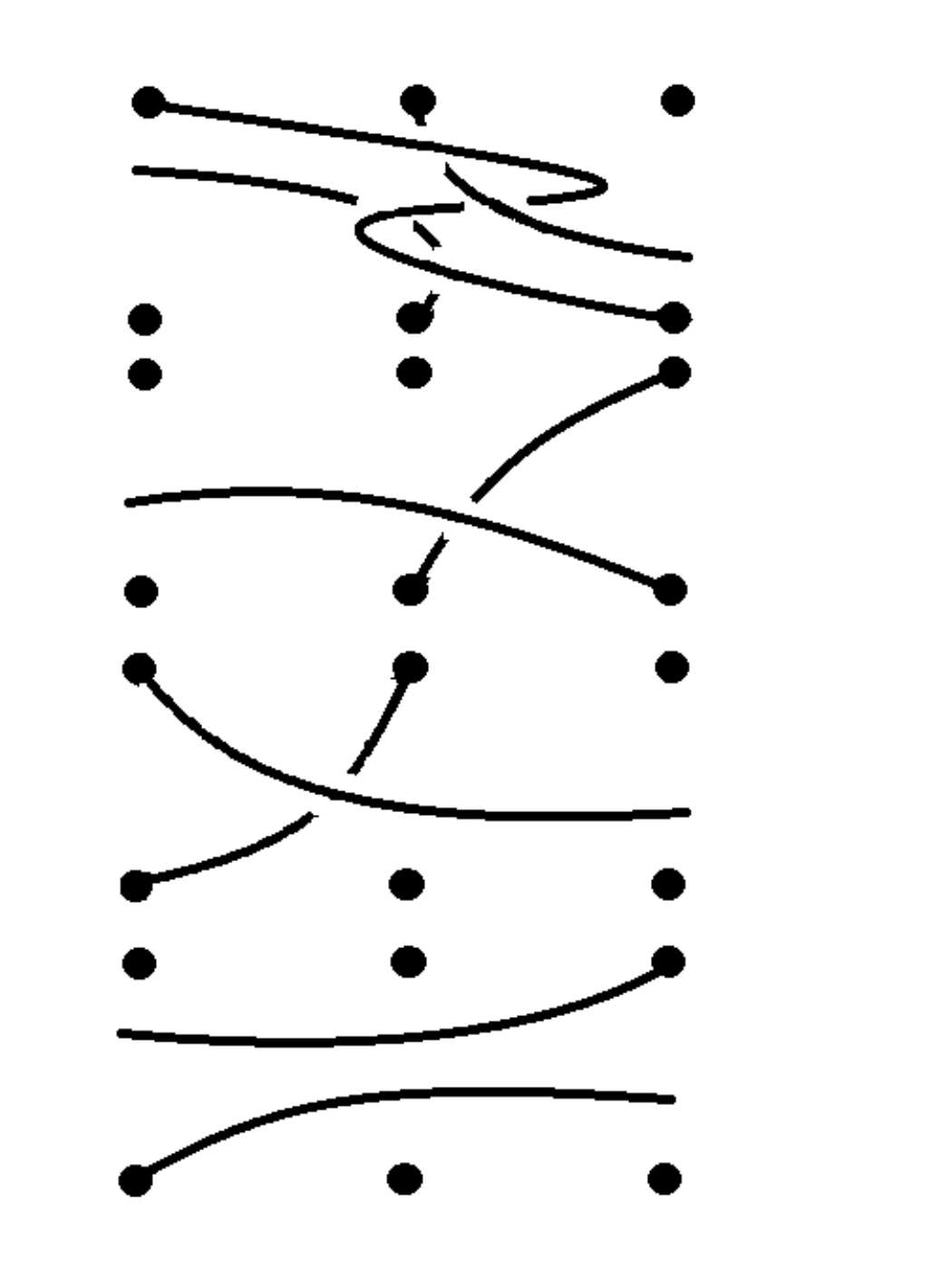}}
\raisebox{140pt}{$=$}
\raisebox{70pt}{\includegraphics[width=4cm,bb=0 0 320 350]{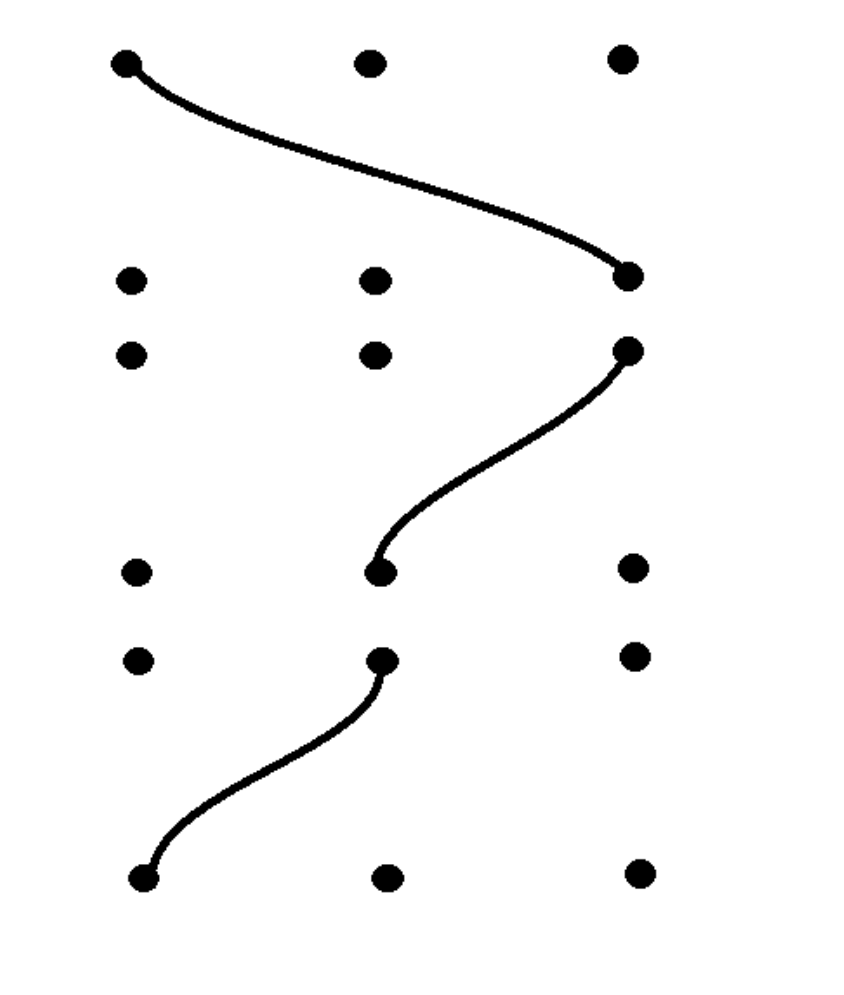}} \\
\raisebox{80pt}{$\beta\gamma=$}
\raisebox{50pt}{\includegraphics[width=4cm,bb=0 0 320 200]{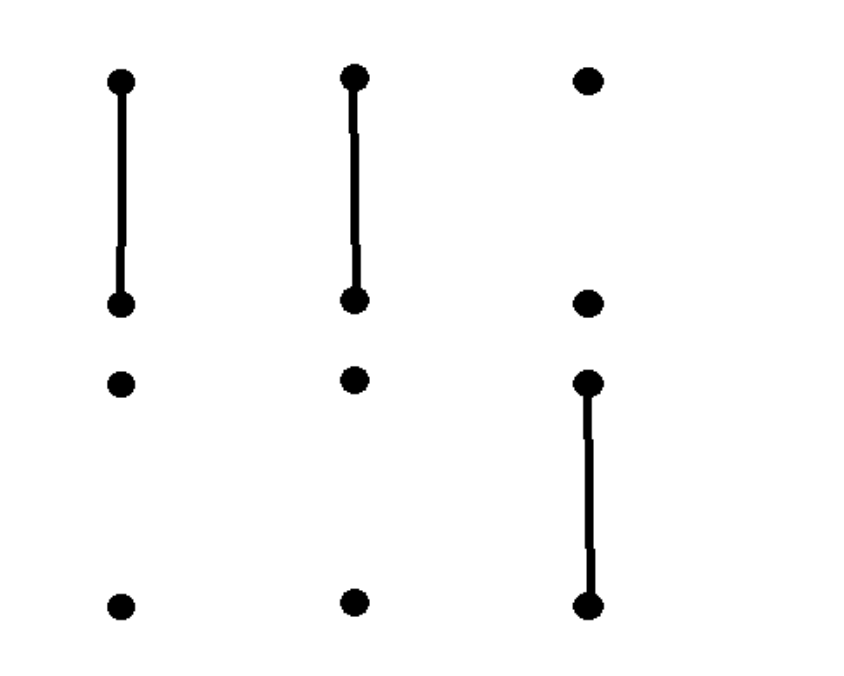}} 
\raisebox{80pt}{$\gamma\beta=$}
\raisebox{50pt}{\includegraphics[width=4cm,bb=0 0 320 200]{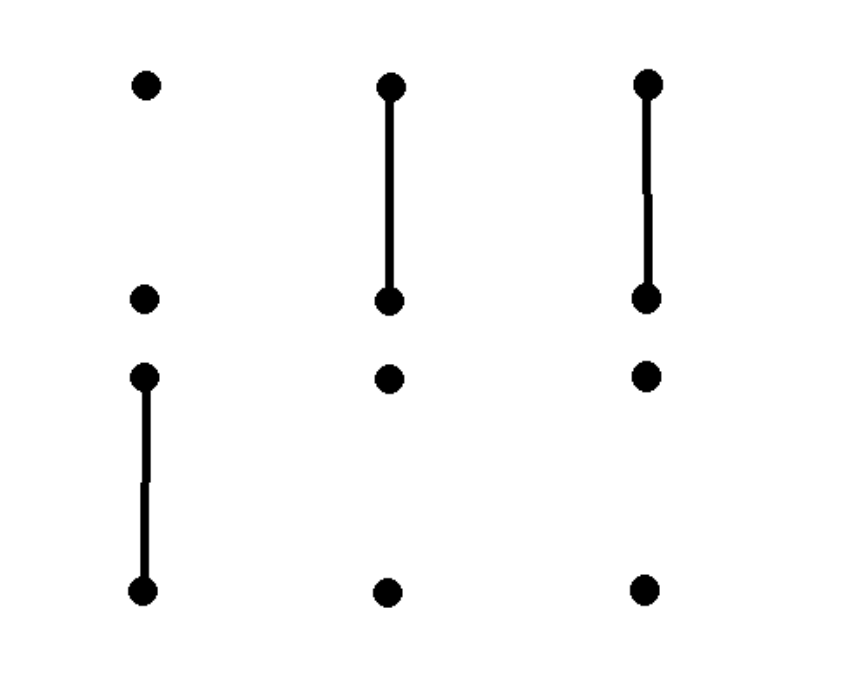}} 
\end{example}
We denote by $[\beta]$ the homotopy equivalence class of $\beta$. Put
\begin{equation*}
\mathscr{RB}_n=\{[\beta]:\beta {\text{ is a }} \mathbb{P}M{\text{-braid}}\}.
\end{equation*}
\begin{theorem} \label{pmb}
The braid $\mathbb{P}M$-monoid $\mathscr{M}$ is isomorphic to the monoid $\mathscr{RB}_n$.
\end{theorem}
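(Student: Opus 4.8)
The plan is to prove Theorem \ref{pmb} by the classical Artin-style two-sided argument: build a monoid homomorphism $\Psi$ from the presented monoid $\mathscr{M}$ to the geometric monoid $\mathscr{RB}_n$ by specifying it on generators, and then produce an inverse by reading a generator-word off a diagram. Note at the outset that the finite counting technique of Proposition \ref{rep} cannot be reused here, since both monoids are infinite; the argument must be topological.

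First I would define $\Psi$ on generators. Send $s_i^{\pm 1}$ to the one-block ($m=1$, full-rank $n$) $\PM$-braid that crosses strings $i$ and $i+1$ positively (resp.\ negatively) and keeps the remaining strings straight, and send $e_{k_1,\dots,k_{m-1}}$ to the trivial $\PM$-braid with $m$ stacked levels whose block partition is $(\{1,\dots,k_1\},\dots,\{k_{m-1}+1,\dots,n\})$, each level carrying the identity partial braid on its own block. To see that $\Psi$ descends to the quotient defined in Definition \ref{Bpm}, I would check geometrically that these images satisfy \eqref{re1-}--\eqref{re5-} under the product $\beta\gamma$ (the stacking-and-deletion operations $(k_i l_j)$). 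Relations \eqref{re1-}--\eqref{re3-} reduce to the usual Reidemeister~II/III identities inside a single level, exactly as in Artin's theorem; relations \eqref{re4-} and \eqref{re5-} are the ones that interact with the block/level structure, and verifying them amounts to tracking which arcs survive the step ``remove any arc that does not join the upper plane to the lower plane.''

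Second, for surjectivity I would argue that every $\PM$-braid is equivalent to a product of generator braids. Level by level, the union of arcs forming the $j$-th block, after orthogonal projection, is an ordinary (partial) braid on $k_j-k_{j-1}$ strings; by Artin's theorem its surviving-strand part is a word in the elementary crossings, hence a product of $\Psi(s_i^{\pm 1})$, while the block partition itself is produced by the appropriate $\Psi(e_{k_1,\dots,k_{m-1}})$. Stacking these factors recovers $\beta$ up to equivalence, so $\Psi$ is onto.

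Third, injectivity is where I expect the real work, and I anticipate it to be the main obstacle. I would construct a map $\Phi:\mathscr{RB}_n\to\mathscr{M}$ by placing a $\PM$-braid in regular position, reading off a word in $s_i^{\pm 1}$ and $e_{k_1,\dots,k_{m-1}}$ level by level, and then proving that the resulting element of $\mathscr{M}$ is unchanged under the equivalence of $\PM$-braids (the combinatorial data $\Phi_i^{\beta}$ together with homotopy through $\PM$-braids). The plan is to break an arbitrary such equivalence into finitely many elementary moves: planar regular isotopy and Reidemeister~II/III moves within a single level yield precisely \eqref{re1-}--\eqref{re3-}, while moves that push a crossing or an isolated non-surviving arc across a level boundary, together with moves that reorder the blocks, are encoded by \eqref{re4-} and \eqref{re5-}, including the $\mathrm{Ad}$-conjugation and the sorting $u^{\,\cdot}$ that re-expresses the merged idempotent in standard form. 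The crux is to show that these local moves generate the full geometric equivalence; in particular, matching the string-deletion bookkeeping in the geometric product with the algebraic collapse in \eqref{re5-} is the delicate point. Once $\Phi$ is shown to be well defined and multiplicative, the identities $\Phi\circ\Psi=\mathrm{id}$ and $\Psi\circ\Phi=\mathrm{id}$ follow because they hold on generators (resp.\ on diagrams in regular position), which completes the proof.
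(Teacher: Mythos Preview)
Your strategy through the definition of $\Psi$ and surjectivity matches the paper. The divergence is at injectivity, and there your route is genuinely different.

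You propose constructing a section $\Phi:\mathscr{RB}_n\to\mathscr{M}$ by reading words off regular-position diagrams and then proving invariance under a complete list of elementary moves generating $\PM$-braid equivalence. This is viable in principle, but it amounts to establishing a Reidemeister-type theorem for $\PM$-braids from scratch; you correctly flag the level-crossing moves and the $\mathrm{Ad}$-bookkeeping of \eqref{re5-} as the hard part, and that part is substantial.

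The paper avoids all of this. Your remark that ``the finite counting technique of Proposition~\ref{rep} cannot be reused here'' is literally correct, but you discard too much: the same relation-chasing that in Proposition~\ref{rep} yielded $\mathscr{R}_n'=\bigcup S_n' e_{k_1,\dots,k_{m-1}}' S_n'$ works verbatim with $B_n$ in place of $S_n$ (only \eqref{re4-}--\eqref{re5-} are used), giving the normal form
\[
\mathscr{M}=\bigcup_{1\le k_1<\dots<k_{m-1}<n} B_n\,e_{k_1,\dots,k_{m-1}}\,B_n.
\]
With this in hand, injectivity is short. If $\Psi(b_1 e_{k_1,\dots,k_{m-1}} b_2)=\Psi(b_1' e_{l_1,\dots,l_{m'-1}} b_2')$, multiply by the invertible elements $\Psi(b_1')^{-1},\Psi(b_2')^{-1}$ to reduce to $\Psi(b_1 e_{k_1,\dots,k_{m-1}} b_2)=\Psi(e_{l_1,\dots,l_{m'-1}})$. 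The block structure of a $\PM$-braid is an equivalence invariant, so $m=m'$ and $k_i=l_i$; then on each block $\{k_{j-1}+1,\dots,k_j\}$ the restricted geometric braid $\Psi(b_1)\Psi(b_2)|_{\{k_{j-1}+1,\dots,k_j\}}$ is trivial, hence by the classical Artin theorem the corresponding word $b_1b_2$ is already trivial block-by-block in $B_n$. Relation~\eqref{re4-} then gives $b_1 e_{k_1,\dots,k_{m-1}} b_2=e_{k_1,\dots,k_{m-1}}$ in $\mathscr{M}$.

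So the paper bootstraps from Artin's theorem for $B_n$ via the $B_n e B_n$ normal form, rather than reproving an Artin-type move theorem for $\PM$-braids. Your plan would eventually arrive, but the normal-form shortcut eliminates exactly the move-by-move analysis you anticipated as the main obstacle.
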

\begin{proof}
Let $\Psi$ denote a map from the set of generators for $\mathscr{M}$ into $\mathscr{RB}_n$ given by \\
\raisebox{50pt}{$s_i\mapsto$}
\includegraphics[width=8cm,bb=0 0 320 150]{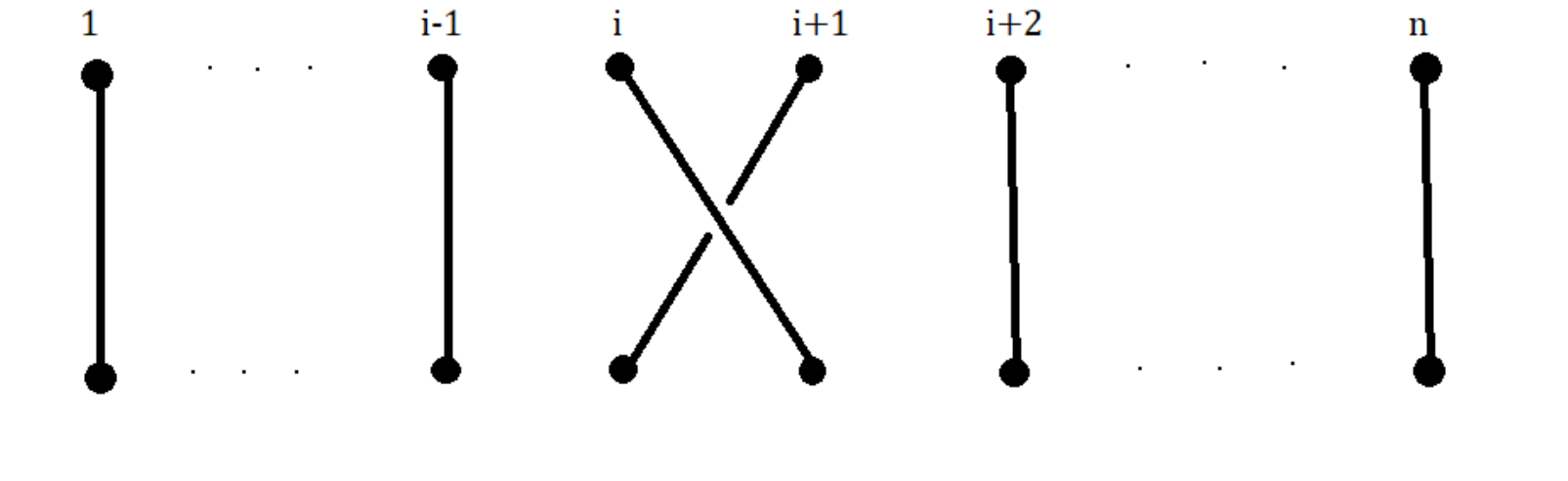}  \\
\raisebox{50pt}{$s_i^{-1}\mapsto$}
\includegraphics[width=8cm,bb=0 0 320 150]{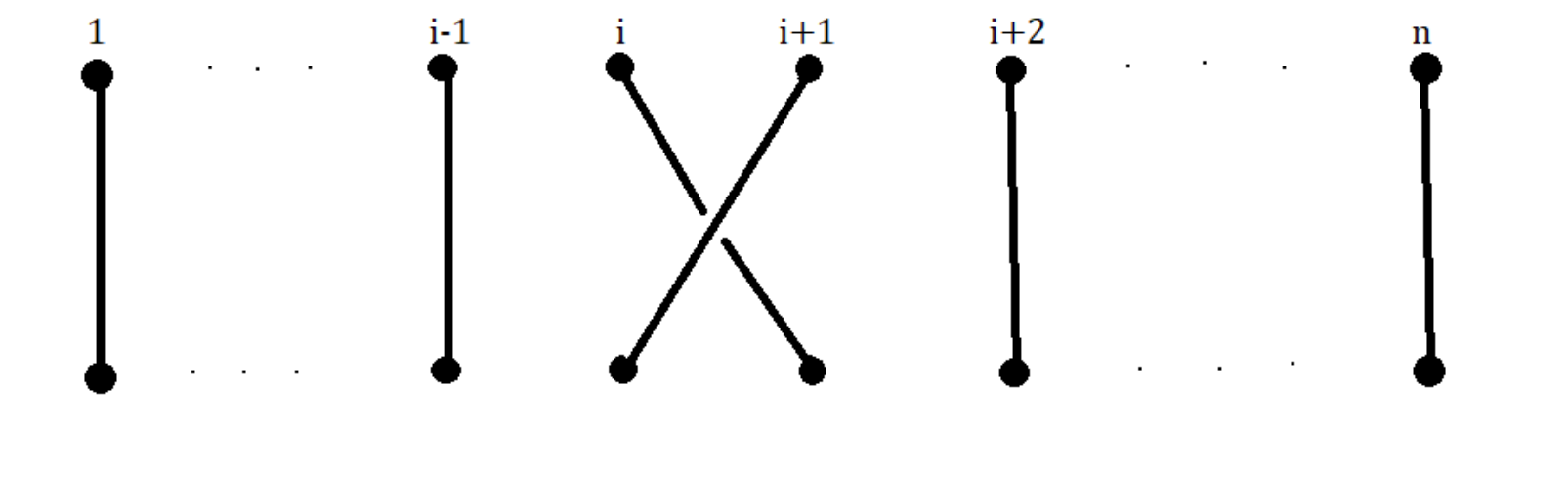} \\
\raisebox{180pt}{$e_{k_1,\dots,k_{m-1}}\mapsto$}
\includegraphics[width=6cm,bb=0 0 320 550]{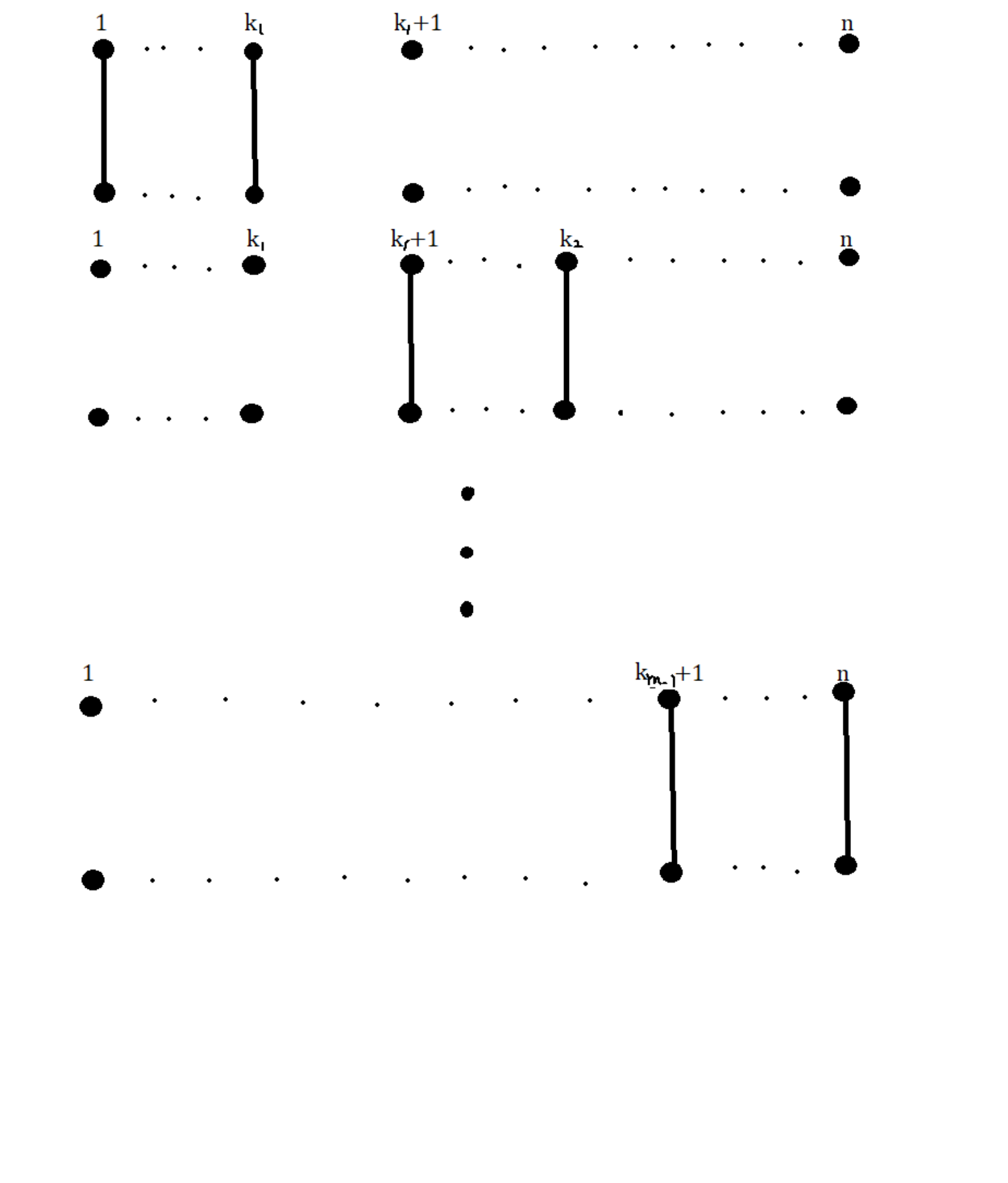} \\
The relations in the presentation for $\mathscr{M}$ hold for the images of the generators, so that $\Psi$ induces a well-defined homomorphism, which we also denote by $\Psi$. To prove the assertion, it suffices to prove that $\Psi$ is bijective. First we prove that $\Psi$ is surjective. Take any $\theta\in\mathscr{RB}_n$, and say $\theta$ has $k_1,k_2-k_1,\dots,n-k_m-1$-strings. Then there exist $s_{j_1},\dots,s_{j_t},s_{i_1},\dots,s_{i_r}$ and $e_{k_1,\dots,k_{m-1}}$ such that
\begin{equation*}
\theta=(\Psi(s_{j_1})\dots\Psi(s_{j_t}))^{-1}\Psi(e_{k_1,\dots,k_{m-1}})(\Psi(s_{j_1})\dots\Psi(s_{j_t}))(\Psi(s_{i_1})\dots\Psi(s_{i_1})).
\end{equation*}
Thus $\Psi$ is surjective. We next show that $\Psi$ is injective. By the discussion in the proof of Proposition \ref{rep},
\begin{equation*}
\mathscr{M}=\displaystyle\bigsqcup_{1\le k_1<\dots<k_{m-1}<n}B_ne_{k_1,\dots,k_{m-1}}B_n,
\end{equation*}
where $B_n$ is the Braid group. Let
\begin{equation*}
\Psi(b_1e_{k_1,\dots,k_{m-1}}b_2)=\Psi(b_1^{'}e_{l_1,\dots,l_{m^{'}-1}}b_2^{'}),
\end{equation*}
where $b_1,b_2,b_1^{'},b_2{'}\in B_n$. Since $\Psi$ is a homomorphism of monoids and $b_1,b_2,b_1^{'},b_2{'}$ have an inverse element, respectively, we can assume
\begin{equation*}
\Psi(b_1e_{k_1,\dots,k_{m-1}}b_2)=\Psi(e_{l_1,\dots,l_{m^{'}}-1}).
\end{equation*}
First, it must be $m=m^{'}$ and $k_1=l_1,\dots,k_{m-1}=l_{m-1}$. Since $\Psi(b_1)\Psi(b_2)|_{\{k_{l-1}+1,\dots,k_l\}}=id|_{\{k_{l-1}+1,\dots,k_l\}}$ for all $l=1,\dots,n$. Then as elements of the braid group $b_1b_2|_{\{k_{l-1}+1,\dots,k_l\}}=id|_{\{k_{l-1}+1,\dots,k_l\}}$. Therefore by the relation (\ref{re4-}), $b_1e_{k_1,\dots,k_{m-1}}b_2=e_{k_1,\dots,k_{m-1}}$. Thus $\Psi$ is injective.
\end{proof}
\subsection{Automorphism of free groups and word problems}
We will give a presentation of  $\mathbb{P}M$-braids by automorphism of free groups and find a solution to the word problem in $\mathscr{RB}_n$. We recall the cases : the classical braid group and an inverse braid monoid. \\ 
Let $F_n=F(x_1,\dots,x_n)$ be the free group of rank $n$ generated by $\{x_1,\dots,x_n\}$. For $1\le k\le n-1$, let $\tau_k:F_n\rightarrow F_n$ be the automorphism defined by 
\begin{equation*}
\tau_k:
\begin{cases}
x_k\mapsto x_k^{-1}x_{k+1}x_k \\
x_{k+1}\mapsto x_k \\
x_l\mapsto x_l & {\text{ if }} l\neq k,k+1 .
\end{cases}
\end{equation*}
Then the mapping $s_k\mapsto \tau_k$ $(1\le k\le n-1)$ determines a representation $\rho:B_n\rightarrow {\rm{Aut}}(F_n)$ called Artin representation. The following theorem  was proved by E. Artin.
\begin{theorem}[\cite{A1},\cite{A2}]
{\rm{(1)}} The Artin representation $\rho:B_n\rightarrow {\rm{Aut}}(F_n)$ is faithful.\\
{\rm{(2)}} An automorphism $\alpha\in {\rm{Aut}}(F_n)$ belongs to $\im\rho$ if and only if $\alpha(x_n\dots x_2x_1)=x_n\dots x_2x_1$ and there exists a permutation $\sigma\in S_n$ such that $\alpha(x_k)$ is conjugate to $x_{\sigma(k)}$ for all $1\le k\le n$.
\end{theorem}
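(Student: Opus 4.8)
The plan is to pass to the topological model in which $B_n$ is the mapping class group of the $n$-punctured disc; this makes the formula for $\rho$ transparent, yields faithfulness almost for free, and supplies the geometry behind the image characterization. Let $D\subset\mathbb{R}^2$ be a closed disc with $n$ marked interior points $p_1,\dots,p_n$ on a diameter and basepoint $*\in\partial D$. Then $\pi_1(D\setminus\{p_1,\dots,p_n\},*)$ is free on loops $x_1,\dots,x_n$, where $x_k$ encircles $p_k$, and $B_n$ is identified with $\pi_0$ of the group of orientation-preserving self-homeomorphisms of $D$ fixing $\partial D$ pointwise and permuting the $p_k$. The first step is to check, by tracking the images of the $x_k$ under the half-twist exchanging $p_k$ and $p_{k+1}$, that the induced automorphism of $\pi_1$ is exactly $\tau_k$; this confirms that $\rho(\sigma_k)=\tau_k$ defines a homomorphism agreeing with the stated formula.

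For part (1) I would deduce faithfulness from the standard fact that a homeomorphism of $D\setminus\{p_k\}$ fixing $\partial D$ pointwise and inducing the identity on $\pi_1$ is isotopic, rel $\partial D$, to the identity (the punctured disc is aspherical, so a homotopy of based loops can be promoted to an isotopy of the surface). Hence $\Ker\rho=1$. A purely algebraic alternative is induction on $n$: for $n=2$ one checks directly that $\tau_1$ has infinite order in $\mathrm{Aut}(F_2)$, and the inductive step uses the normal form from Artin combing; I would keep the topological argument as the primary one.

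For part (2), necessity is immediate from the generators. Each $\tau_k$ fixes every generator except $x_k,x_{k+1}$ and fixes the block $x_{k+1}x_k$, since $\tau_k(x_{k+1}x_k)=x_k\cdot x_k^{-1}x_{k+1}x_k=x_{k+1}x_k$; therefore it fixes $\delta:=x_n\cdots x_1$. Moreover $\tau_k$ sends the conjugacy class of $x_k$ to that of $x_{k+1}$ and conversely, realizing the transposition $(k,k+1)$. Consequently any $\alpha=\rho(\beta)$ fixes $\delta$ and permutes the classes $[x_k]$ by the image $\sigma\in S_n$ of $\beta$ under the canonical surjection $B_n\to S_n$.

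The heart of the theorem, and the step I expect to be the main obstacle, is sufficiency in part (2). Given $\alpha$ with $\alpha(\delta)=\delta$ and $\alpha(x_k)=w_k\,x_{\sigma(k)}\,w_k^{-1}$, I would first choose a braid $\beta_\sigma$ lifting $\sigma$ and replace $\alpha$ by $\rho(\beta_\sigma)^{-1}\alpha$, reducing to the ``pure'' case $\sigma=\mathrm{id}$, so that $\alpha(x_k)=w_kx_kw_k^{-1}$ and still $\alpha(\delta)=\delta$. It then suffices to show such $\alpha$ lies in $\rho(K_n)$, where $K_n:=\Ker(B_n\to S_n)$ is the pure braid group. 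The plan is to induct on the total conjugator length $\sum_k|w_k|$, the $w_k$ taken reduced and of minimal length: one analyses the cancellations in the reduced word for $\alpha(x_n)\cdots\alpha(x_1)$ that are forced by its collapsing back to $\delta$, and shows that these always permit multiplying $\alpha$ on one side by the image $\rho(A_{ij})^{\pm1}$ of a standard pure-braid generator so as to strictly decrease $\sum_k|w_k|$. The induction terminates at $\alpha=\mathrm{id}$, whence the original automorphism is the product of the peeled-off generators and lies in $\im\rho$. The delicate point is exactly this peak-reduction lemma: extracting from the single relation $\alpha(\delta)=\delta$ enough control over the leading and trailing syllables of the $w_k$ to guarantee that a length-decreasing generator always exists.
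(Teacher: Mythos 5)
The paper does not actually prove this statement: it is quoted as a classical theorem of Artin with citations to \cite{A1},\cite{A2}, so your attempt can only be measured against the standard proofs in the literature (Artin's combinatorial argument, reproduced in Birman's book, and the topological argument via the mapping class group of the punctured disc as in Kassel--Turaev or Farb--Margalit). Your part (1) and the necessity half of (2) are essentially correct and follow the standard topological route: the verification $\tau_k(x_{k+1}x_k)=x_k\cdot x_k^{-1}x_{k+1}x_k=x_{k+1}x_k$ is exactly the right computation for why $\delta=x_n\cdots x_2x_1$ is fixed, and identifying $B_n$ with the mapping class group of the $n$-punctured disc does give faithfulness. One caveat there: asphericity alone does not ``promote a homotopy to an isotopy''; passing from a homeomorphism acting trivially on $\pi_1$ to an isotopy rel $\partial D$ requires the homotopy-implies-isotopy theorem for surfaces (Epstein) or the Alexander method, and you should invoke that explicitly rather than gesture at it.

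The genuine gap is the sufficiency half of (2), which you correctly identify as the heart of the theorem but then only announce rather than prove. After the (legitimate) reduction to the pure case $\alpha(x_k)=w_kx_kw_k^{-1}$ with $\alpha(\delta)=\delta$, everything rests on the peak-reduction claim that some $\rho(A_{ij})^{\pm1}$ strictly decreases $\sum_k|w_k|$. This is precisely where all the difficulty lives, and it is not routine: the conjugators $w_k$ are only well defined modulo right multiplication by powers of $x_k$, so the induction measure must be taken over this normalization; moreover a single generator multiplication need not decrease the measure unless one first extracts, from the free-group identity $w_nx_nw_n^{-1}\cdots w_1x_1w_1^{-1}=x_n\cdots x_1$, detailed control of the leading and trailing syllables of the $w_k$ (which cancellations are forced, at which indices, and on which side to multiply). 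Artin's 1947 proof and Birman's Theorem 1.9 devote the bulk of their length to exactly this cancellation bookkeeping, and naive versions of the length-reduction claim fail without it. As written, your proposal reduces the theorem to an unproven combinatorial lemma, so it is a correct plan along the classical lines but not a complete proof.
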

The braid group $B_n$ can be viewed as a subgroup of ${\rm{Aut}}(F_n)$. Moreover this yields a solution to the word problem in $B_n$. 

Next recall the case of the inverse braid monoid studied by V. V. Vershinin \cite{V1}. Let $EF_n$ be a monoid of partial isomorphisms of a free group defined as follows. Let $a$ be an element of rook monoid $R_n$, and $J_k$ the image of  $a$. Let elements $i_1,\dots,i_k$ belong to the domain of definition of $a$. The monoid $EF_n$ consists of isomorphisms 
\begin{equation*}
F(x_{i_1},\dots,x_{i_k})\rightarrow F(x_{j_1},\dots,x_{j_k})
\end{equation*} 
expressed by 
\begin{equation*}
f_a(x_i)=
\begin{cases}
w_i^{-1}x_{a(i)}w_i & (i\in \{i_1,\dots,i_k\}) \\
{\rm{not\,defined}} & ({\rm{otherwise}}) \,\,\,\,\,\,\,\,\,\,\,\,\,\,\,.
\end{cases}
\end{equation*}
We define a map $\phi_n$ from $IB_n$ to $EF_n$ expanding the Artin representation $\rho$ by the condition that $\phi_n(e_j)$ as a partial isomorphism of $F_n$ is given by the formula  
\begin{equation*}
\phi_n(e_j)(x_i)=
\begin{cases}
x_i & (i\le j) \\
{\rm{not\,defined }} & (i>j).
\end{cases}
\end{equation*}
\begin{theorem}[\cite{V1} Theorem 2.2.]
The homomorphism $\phi_n$ is a monomorphism.
\end{theorem}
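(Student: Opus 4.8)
The plan is to treat ``monomorphism'' as two assertions and dispatch them in turn: that $\phi_n$ is a well-defined homomorphism, and that it is injective, with the injectivity carrying essentially all the content. For the first assertion I would check that the Artin automorphisms $\tau_k=\rho(s_k)$ together with the partial identities $\phi_n(e_j)$ satisfy the defining relations of $IB_n$. The braid relations hold because $\rho$ is a homomorphism on $B_n$, and the mixed relations involving the $e_j$ hold by direct substitution into the formula $\phi_n(e_j)(x_i)=x_i$ for $i\le j$. The only relations deserving attention are those expressing that braiding among strands which are subsequently deleted is trivial: after applying $\phi_n(e_j)$ the generators $x_i$ with $i>j$ become undefined, so any conjugating words built from them disappear, and this is precisely what those relations predict.

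For injectivity, suppose $\phi_n(u)=\phi_n(v)$ and write $f:=\phi_n(u)$, a partial isomorphism $F(x_{i_1},\dots,x_{i_k})\to F(x_{j_1},\dots,x_{j_k})$. First I would recover the underlying rook-monoid element. The domain $\Dom(f)$ is the free group on the surviving strands, so $u$ and $v$ have the same domain $D=\{i_1,\dots,i_k\}$; and since $f(x_i)=w_i^{-1}x_{a(i)}w_i$ with $x_{a(i)}$ a primitive generator, the conjugacy class of $f(x_i)$ in the codomain determines $a(i)$ uniquely. Hence $u$ and $v$ project to the same element $a\in R_n$ of rank $k$ with domain $D$ and image $C=\{j_1,\dots,j_k\}$.

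Next I would reduce to the braid group on the $k$ surviving strands. Using the double-coset decomposition of $IB_n$ by rank (the analogue of the decomposition $\mathscr{M}=\bigsqcup B_ne_{k_1,\dots,k_{m-1}}B_n$ exploited in the proof of Proposition \ref{rep}), an element with fixed rook shadow $a$ is determined by a braid on the $k$ present strands. Relabelling $D$ and $C$ by $\{1,\dots,k\}$, the restriction of $f$ is an automorphism of $F_k$ of Artin type: it fixes the product of the generators and sends each generator to a conjugate of a permuted one, so by the characterization of $\im\rho$ it lies in the image of the Artin representation of $B_k$. As $\phi_n(u)=\phi_n(v)$ forces the two restricted automorphisms to coincide, the faithfulness of the Artin representation on $B_k$ gives that the two braids on the surviving strands are equal; together with the equality of rook shadows this yields $u=v$.

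The main obstacle I anticipate is the passage from the partial isomorphism back to a genuine element of the Artin image on $B_k$. One must verify that the conjugating words $w_i$, which are determined only modulo the centralizer $\langle x_{a(i)}\rangle$, nonetheless assemble into a single automorphism of $F_k$ to which faithfulness can be applied, and that the relabelling of $D$ and $C$ respects the braid structure and not merely the permutation. Carrying this out carefully is exactly what confirms that the relations of $IB_n$ discard precisely the braiding among deleted strands and nothing more, so that no information beyond the surviving sub-braid is lost under $\phi_n$.
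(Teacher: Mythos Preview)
The paper does not prove this theorem; it is quoted verbatim as \cite{V1} Theorem~2.2 with no accompanying proof, serving only as background for the analogous Proposition about $\varphi_n$ that follows. So there is no ``paper's own proof'' to compare your proposal against.

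That said, your outline is a plausible route to Vershinin's result, and the self-identified obstacle is real: recovering a well-defined element of $B_k$ from the partial isomorphism is exactly where the work lies. Note, however, that the paper's proof of the parallel Proposition for $\varphi_n$ takes a rather different tack. Instead of arguing elementwise via conjugacy classes and rook shadows, it decomposes $\mathscr{RB}_n$ as a disjoint union $\bigsqcup B(p_1,\sigma(p_1))\times\dots\times B(p_m,\sigma(p_m))$ indexed by ordered partitions and permutations, and then appeals to a commutative square (citing diagram~(2.6) of \cite{V1}) whose vertical maps are bijections and whose top map is the identity on products of ordinary braid groups; injectivity of $\varphi_n$ then follows formally. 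If you want your write-up to align with the surrounding material, that diagrammatic reduction to products of $B_{r_i}$'s---with faithfulness of the Artin representation applied componentwise---is closer in spirit to what the paper actually does than the direct conjugacy-class argument you sketch.
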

\begin{theorem}[\cite{V1} Theorem 2.3.]
The monomorphism $\phi_n$ gives a solution to the word problem for the inverse braid monoid.
\end{theorem}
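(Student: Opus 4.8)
The plan is to invoke the standard principle that a monomorphism into a monoid with algorithmically decidable equality yields a solution to the word problem. Since the preceding theorem asserts that $\phi_n\colon IB_n\to EF_n$ is a monomorphism, two words $w$ and $w'$ in the generators $\sigma_i^{\pm 1}$ and $e_j$ satisfy $w=w'$ in $IB_n$ if and only if $\phi_n(w)=\phi_n(w')$ in $EF_n$. It therefore suffices to give an effective procedure that computes $\phi_n(w)$ in a canonical shape, together with a decision procedure for equality of two such partial isomorphisms.

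First I would describe how to compute $\phi_n(w)$. Each generator has an explicit image: $\sigma_k\mapsto\tau_k$ acts on all of $F_n$ by the Artin formulas, while $\phi_n(e_j)$ is the partial isomorphism with domain $F(x_1,\dots,x_j)$ fixing each $x_i$ for $i\le j$ and undefined for $i>j$. Reading $w$ one letter at a time and composing these maps is effective: at each step one substitutes the current images into the next generator's formula and freely reduces the resulting words in $F_n$. The output is a partial isomorphism $f_a$ of the form $f_a(x_i)=w_i^{-1}x_{a(i)}w_i$ on its domain, recorded by the underlying partial bijection $a$ in the rook monoid $R_n$ together with the freely reduced conjugators $w_i$.

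Next I would check that equality in $EF_n$ is decidable. Two partial isomorphisms agree precisely when their domains coincide and they take the same value on each basis element of the common domain. The first condition is a finite combinatorial comparison of the rook-monoid data: equal rank, equal domain and image index sets, and equal underlying bijection. The second condition reduces, for each index $i$ in the domain, to deciding the equality $w_i^{-1}x_{a(i)}w_i=(w_i')^{-1}x_{a'(i)}w_i'$ in $F_n$; once the first condition forces $a=a'$, this is exactly the word problem for free groups, solved by reducing both sides to reduced words and comparing. Both checks terminate, so equality in $EF_n$ is decidable.

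Combining the two steps with injectivity produces the algorithm: compute $\phi_n(w)$ and $\phi_n(w')$ in the normal form above and test them for equality in $EF_n$; by injectivity this decides whether $w=w'$ in $IB_n$. The only point requiring genuine care is the bookkeeping in the composition step, where one must simultaneously track the rook-monoid part $a$, whose domain can shrink whenever an $e_j$ is applied, and the conjugators $w_i$, keeping the latter freely reduced so that the final comparison is the honest free-group word problem rather than an unbounded search. Everything else rests on the classical solvability of the word problem for free groups together with finite combinatorics on supports.
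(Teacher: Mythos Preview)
The paper does not give its own proof of this statement: it is quoted verbatim as Theorem~2.3 of \cite{V1} and serves as background. There is therefore nothing in the paper to compare your argument against directly. That said, the paper does prove the analogous statement for the braid $\mathbb{P}M$-monoid (Theorem~\ref{wp}), and its entire proof is the single sentence ``Two words represent the same element of the monoid if and only if they have the same action on the finite set of generators of the free group.'' Your proposal is a correct and considerably more detailed unpacking of exactly this principle: compute the image of each word as a partial isomorphism by composing generator-by-generator, then compare the results by comparing domains (a finite combinatorial check) and values on the basis elements (the word problem in $F_n$). So your approach is the same as the paper's, just made explicit where the paper leaves the algorithmic content implicit.
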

Let $\mathscr{E}F_n$ be a monoid of sequence of partial isomorphisms of free group $F_n$ defined as follows. 

Let $\mathbb{A}=(\sigma,(\{i_1,\dots,i_{k_1}\},\dots,\{i_{k_{m-1}+1},\dots,n\}))\in \mathscr{R}_n$. The monoid $\mathscr{E}F_n$ consists of sequence of isomorphisms $\bm{f}_{\mathbb{A}}=(f_{A_1},\dots,f_{A_m})$, where for $j=1,\dots,m$
\begin{equation*}
f_{A_j}:F(x_{i_{k_{j-1}+1}},\dots,x_{i_{k_j}})\rightarrow F(x_{\sigma(i_{k_{j-1}+1})},\dots,x_{\sigma(i_{k_j})})
\end{equation*}
is defined by
\begin{equation*}
f_{A_j}(x_l)=
\begin{cases}
w_l^{-1}x_{A_j(l)}w_l & (l\in \{i_{k_{j-1}+1},\dots,i_{k_j}\}) \\
{\rm{not \, defined}} & ({\rm{otherwise}})\,\,\,\,\,\,\,\,\,\,\,\,\,\,\,\,\,\,\,\,\,\,\,\,\,\,\,, 
\end{cases}
\end{equation*}
where $w_i$ is a word on $x_{\sigma(i_{k_{j-1}+1})},\dots,x_{\sigma(i_{k_j})}$. We define a map $\varphi_n$ from $\mathscr{RB}_n$ to $\mathscr{E}F_n$ extending the Artin representation $\rho$ by the condition that $\varphi_n(e_{k_1,\dots,k_{m-1}})$ as a sequence of partial isomorphisms of $F_n$ is given by the formula 
\begin{equation*}
\varphi_n(e_{k_1,\dots,k_{m-1}})(x_i)=(f_1(x_i),\dots,f_{m-1}(x_i)),
\end{equation*}
where 
\begin{equation*}
f_j(x_i)=
\begin{cases}
x_i & (k_{j-1}\le i\le k_j) \\
{\text{not  defined}} & (i<k_{j+1},i>k_j)
\end{cases}
\end{equation*}
for $j=1,\dots,m-1$. 
\begin{proposition}
The homomorphism $\varphi_n$ is a monomorphism.
\end{proposition}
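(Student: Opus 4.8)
The plan is to run the same argument as in the proof of Theorem~\ref{pmb}, with the geometric map $\Psi$ replaced by $\varphi_n$ and the role of braid restriction played by the faithfulness of the Artin representation on each block. First I would record that $\varphi_n$ is a genuine homomorphism by checking that the images of the generators satisfy the defining relations (\ref{re1-})--(\ref{re5-}) of $\mathscr{M}\cong\mathscr{RB}_n$; this is routine and parallel to the opening of Theorem~\ref{pmb}. By that theorem and the double coset decomposition obtained in the proof of Proposition~\ref{rep},
\[
\mathscr{M}=\bigsqcup_{1\le k_1<\dots<k_{m-1}<n}B_n\,e_{k_1,\dots,k_{m-1}}\,B_n,
\]
so I may write any element as $b_1 e b_2$ with $b_1,b_2\in B_n$ and $e=e_{k_1,\dots,k_{m-1}}$. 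Suppose $\varphi_n(b_1 e b_2)=\varphi_n(b_1' e' b_2')$ with $e'=e_{l_1,\dots,l_{m'-1}}$. Since $\varphi_n$ is a monoid homomorphism and each $\varphi_n(b_i),\varphi_n(b_i')$ is an honest invertible automorphism of $F_n$, I can compose with the inverse automorphisms on the left and right to reduce, exactly as in Theorem~\ref{pmb}, to the case $\varphi_n(b_1 e b_2)=\varphi_n(e')$.

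Next I would read off the combinatorial data from the sequences of partial isomorphisms. The image $\varphi_n(e')$ is a sequence of $m'$ partial isomorphisms whose $j$-th entry has domain the free group on the block $\{l_{j-1}+1,\dots,l_j\}$ and acts as the identity there; since $\mathscr{E}F_n$ records these in order, matching the two sides forces the ordered set partitions to agree, whence $m=m'$ and $k_i=l_i$ for all $i$. This is the analogue of the step ``it must be $m=m'$ and $k_1=l_1,\dots$'' in Theorem~\ref{pmb}.

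With the partition fixed, I would work block by block. On a block $B^{(j)}=\{k_{j-1}+1,\dots,k_j\}$ the $j$-th entry of $\varphi_n(b_1 e b_2)$ is, after the $e$-restriction, an Artin-type automorphism $x_l\mapsto w_l^{-1}x_{\sigma(l)}w_l$ of the free group $F(x_l:l\in B^{(j)})$ induced by the restricted braid $b_1b_2|_{B^{(j)}}$, while the $j$-th entry of $\varphi_n(e')=\varphi_n(e)$ is the identity of that free group. Equality of the two entries says this block automorphism is trivial, and by Artin's theorem (faithfulness of $\rho$, applied to the braid group on the $\#B^{(j)}$ strings of $B^{(j)}$) the restricted braid $b_1b_2|_{B^{(j)}}$ is the trivial braid. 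As this holds for every $j$, the hypothesis of relation (\ref{re4-}) is met, so $b_1 e b_2=e$ in $\mathscr{M}$; undoing the reduction yields $b_1 e b_2=b_1' e' b_2'$, proving injectivity.

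The main obstacle is the blockwise transfer of faithfulness. One must verify that, after the restriction imposed by $\varphi_n(e)$, the composite $\varphi_n(b_1)\varphi_n(e)\varphi_n(b_2)$ really does send each block into itself with conjugating words $w_l$ confined to the generators of that block, so that its $j$-th entry is genuinely the Artin automorphism of $F(x_l:l\in B^{(j)})$ attached to $b_1b_2|_{B^{(j)}}$ rather than some map involving generators outside the block. Only once this localization is established can Artin's theorem be invoked separately on each block rather than merely on the global group $F_n$; granting it, faithfulness of $\rho$ together with relation (\ref{re4-}) closes the argument immediately.
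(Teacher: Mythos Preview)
Your approach is viable in outline but genuinely different from the paper's, and the obstacle you flag at the end is real and is precisely what the paper sidesteps.

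The paper does not replay the double-coset argument of Theorem~\ref{pmb}. Instead it decomposes $\mathscr{RB}_n$ geometrically as a set,
\[
\mathscr{RB}_n \;=\; \bigsqcup_{m\ge 1}\ \bigsqcup_{(p_1,\dots,p_m)\in P_n,\ \sigma\in S_n} B(p_1,\sigma(p_1))\times\cdots\times B(p_m,\sigma(p_m)),
\]
where $B(p_i,\sigma(p_i))$ is the set of partial braids on the $\#p_i$ strings starting at $p_i$ and ending at $\sigma(p_i)$. On each piece the paper writes down a commutative square identifying $\varphi_n$ with the product of ordinary Artin representations $\rho_1\times\cdots\times\rho_m$ on $B_{r_1}\times\cdots\times B_{r_m}$ (with $r_i=\#p_i$), up to relabelling bijections on either side; commutativity is imported from Vershinin's diagram~(2.6) in~\cite{V1}. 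Injectivity then follows at once from faithfulness of each $\rho_i$, together with the fact that distinct pairs $((p_1,\dots,p_m),\sigma)$ give elements of $\mathscr{E}F_n$ with distinct underlying $\mathscr{R}_n$-data.

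This is exactly your ``localization'' step, packaged the other way round. Rather than starting from $b_1 e b_2$ and trying to show that the $j$-th entry of $\varphi_n(b_1 e b_2)$ equals the Artin image of the strand-deleted braid $b_1b_2|_{B^{(j)}}$, the paper starts from the block braids themselves (via the geometric product decomposition) and observes that $\varphi_n$ agrees with the product of Artin maps after relabelling. Your route would require proving directly that for $b\in B_n$ the composite of $\rho(b)$ with the projection killing the generators outside a block coincides with the Artin image of $b$ with those strands deleted; that statement is the content of the cited diagram in~\cite{V1}, and without it your argument is, as you say, conditional. The paper's route buys a one-line proof by citation; your route would be more self-contained but needs that compatibility lemma written out before relation~(\ref{re4-}) can be invoked.
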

\begin{proof}
Let $\mathscr{RB}_n^{(m)}$ be the set of $\mathbb{P}M$-braids which have $m$ layers. Then as a set we have the following decomposition :
\begin{equation*}
\begin{split}
\mathscr{RB}_n&=\displaystyle \bigsqcup_{m\geq 1}\mathscr{RB}_n^{(m)} \\
&=\displaystyle \bigsqcup_{m\geq 1} \displaystyle \bigsqcup_{(p_1,\dots,p_m)\in P_n,\sigma\in S_n}B(p_1,\sigma(p_1))\times\dots\times B(p_m,\sigma(p_m)),
\end{split}
\end{equation*}
where $B(p_i,\sigma(p_i))$ is the braid group starting at $p_i$ and ending at $\sigma(p_i)$. Let $\#p_i=r_i$. Then consider the following diagram:
\[
\xymatrix{
B_{r_1}\times\dots\times B_{r_m}\ar[r]^{Id\times\dots\times Id}\ar[d]_{\rho_1\times\dots\times\rho_m}&B_{r_1}\times\dots\times B_{r_m}\ar[d]^{\psi(p_1,\sigma(p_1))\times\dots\times\psi(p_m,\sigma(p_m))} \\
B(p_1,\sigma(p_1))\times\dots\times B(p_m,\sigma(p_m))\ar[r]^-{\varphi_n}&\mathscr{E}F_n .
}
\]
The above diagram is commutative since the diagram $(2.6)$ in \cite{V1} is commutative. Thus $\varphi_n$ is a monomorphism.
\end{proof}
Suppose that we are given a group or a monoid presented by generators and relations $\langle X\mid R\rangle$. Let $\beta_1,\beta_2\in\langle X\mid R\rangle$. A solution to the word problem of $\langle X\mid R\rangle$ is a strategy if we can judge $\beta_1=\beta_2$ or not in $\langle X\mid R\rangle$. 
\begin{theorem} \label{wp}
The morphism $\phi_n$ gives a solution to the word problem for the braid $\mathbb{P}M$-monoid.
\end{theorem}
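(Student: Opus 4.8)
The plan is to leverage the monomorphism $\varphi_n:\mathscr{RB}_n\hookrightarrow\mathscr{E}F_n$ established in the preceding proposition, so that deciding equality in the braid $\PM$-monoid $\mathscr{M}\simeq\mathscr{RB}_n$ (using Theorem \ref{pmb}) reduces to deciding equality of images in $\mathscr{E}F_n$, i.e.\ equality of sequences of partial isomorphisms of the free group $F_n$. First I would observe that since $\varphi_n$ is injective, for $\beta_1,\beta_2\in\mathscr{RB}_n$ we have $\beta_1=\beta_2$ if and only if $\varphi_n(\beta_1)=\varphi_n(\beta_2)$; thus a solution to the word problem follows once we can algorithmically compute $\varphi_n(\beta_i)$ from any word representing $\beta_i$ and then compare the two resulting sequences $\bm{f}=(f_{A_1},\dots,f_{A_m})$ and $\bm{g}=(g_{A_1'},\dots,g_{A_{m'}'})$.

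Next I would spell out what comparison of two elements of $\mathscr{E}F_n$ amounts to. Two sequences of partial isomorphisms are equal precisely when they have the same number $m=m'$ of layers, the same underlying ordered set partition and permutation data $\mathbb{A}$ (equivalently the same domains $F(x_{i_{k_{j-1}+1}},\dots,x_{i_{k_j}})$ and codomains for each layer $j$), and the corresponding partial isomorphisms $f_{A_j}$ and $g_{A_j'}$ agree as maps of free groups. Each $f_{A_j}$ is determined by a finite list of images $f_{A_j}(x_l)=w_l^{-1}x_{A_j(l)}w_l$, which are words in the generators, so the equality $f_{A_j}=g_{A_j'}$ is an equality of finitely many reduced words in a free group. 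The crucial point is that the word problem for free groups is solvable: one reduces each side to its normal form and checks for literal coincidence. Hence, layer by layer, equality in $\mathscr{E}F_n$ is decidable.

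The computational heart of the argument is that $\varphi_n$ is effective, that is, from the generators $s_i^{\pm1}$ and $e_{k_1,\dots,k_{m-1}}$ and the relations \eqref{re1-}--\eqref{re5-} one can actually evaluate the sequence of partial isomorphisms. This follows because $\varphi_n$ extends the Artin representation $\rho$: each $\varphi_n(s_i^{\pm1})$ acts as the explicit automorphism $\tau_i^{\pm1}$ on the relevant free group on the layer where it is defined, and each $\varphi_n(e_{k_1,\dots,k_{m-1}})$ restricts the domain according to the displayed formula for the $f_j$. Composing these along any word representing $\beta_i$ produces, by iterated substitution of the $\tau_i^{\pm1}$ into the word images, a concrete sequence of partial isomorphisms with explicitly computable conjugating words $w_l$; one then reduces these words in $F_n$.

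I expect the main obstacle to be handling the interaction between the layer structure and the domain restrictions under composition: applying a braid generator after an idempotent $e_{k_1,\dots,k_{m-1}}$ can alter which strings survive and which layer a given strand belongs to, and one must verify that the bookkeeping of domains, codomains, and conjugators is consistent with the defining relations---in particular with the $\mathrm{Ad}$-adjustment appearing in \eqref{re5-}. This is precisely where the commutativity of the diagram in the previous proposition (inherited from diagram $(2.6)$ of \cite{V1}) does the work, guaranteeing that the computed image is independent of the chosen word and compatible with the partial-isomorphism composition in $\mathscr{E}F_n$. Once this well-definedness and effectivity are in hand, the solution to the word problem is immediate: evaluate $\varphi_n$ on both words and test the finitely many free-group word equalities layer by layer.
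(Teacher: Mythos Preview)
Your proposal is correct and follows essentially the same approach as the paper: reduce equality in $\mathscr{RB}_n$ to equality of images under the monomorphism $\varphi_n$, and then observe that equality in $\mathscr{E}F_n$ is decidable because it amounts to comparing the (effectively computable) action on the finite set of free-group generators layer by layer. The paper's own proof is a single sentence to this effect; you have simply unpacked the effectivity and layer-bookkeeping that the paper leaves implicit.
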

\begin{proof}
This assertion holds by the following fact : Two words represent the same element of the monoid if and only if they have the same action on the finite set of generators of the free group.
\end{proof}

\thanks{Acknowledgements.}
I would like to thank Professor Mutsumi Saito and Professor Youichi Shibukawa for informing the matched pairs. I also thank Masamitsu Aoki, Kazuya Takasaki and Toshifumi Yabu for advising helpful comments. 

\end{document}